\newtheorem{theorem}{Theorem}[section]
\def\S{{\mathbb S}}
\def\Z{{\mathbb Z}}
\theoremstyle{definition}
\newtheorem{question}[theorem]{Question}
\newtheorem{proposition}[theorem]{Proposition}
\newtheorem{corollary}[theorem]{Corollary}
\newtheorem{remark}[theorem]{Remark}
\numberwithin{equation}{section}
\begin{document}
\title{Exponents of $[\Omega(\S^{r+1}), \Omega (Y)]$}
\author{Marek Golasi\'nski}
\address{Faculty of Mathematics
and Computer Science, University of Warmia and Mazury,
S\l oneczna 54 Street, 10-710 Olsztyn, Poland}
\email{maregk@matman.uwm.edu.pl}
\author{Daciberg Lima Gon\c calves}
\address{Dept. de Matem\'atica - IME - USP,
Rua~do~Mat\~ao 1010 CEP: 05508-090, S\~ao Paulo - SP, Brasil}
\email{dlgoncal@ime.usp.br}
\author{Peter Wong}
\address{Department of Mathematics, Bates College, Lewiston, ME 04240, U.S.A.}
\email{pwong@bates.edu}

\thanks{}

\begin{abstract}
We investigate the exponents of the total Cohen groups $[\Omega(\S^{r+1}), \Omega(Y)]$ for any $r\ge 1$. In particular, we show that for $p\ge 3$, the $p$-primary exponents of $[\Omega(\S^{r+1}), \Omega(\S^{2n+1})]$ and  $[\Omega(\S^{r+1}), \Omega(\S^{2n})]$ coincide with the $p$-primary homotopy exponents of spheres $\S^{2n+1}$ and $\S^{2n}$, respectively.
\par We further study the exponent problem
when $Y$ is a space with the homotopy type of $\Sigma(n)/G$ for a homotopy $n$-sphere $\Sigma(n)$, the complex projective space $\mathbb{C}P^n$ for $n\ge 1$ or the quaternionic projective space $\mathbb{H}P^n$ for $1\le n\le \infty$.
\end{abstract}
\date{\today}
\keywords{Barratt-Puppe sequence, Cohen group, $EHP$ sequence, James construction, James-Hopf map (invariant), Moore space, $p$-primary (homotopy) exponent, projective space, homotopy space form, Whitehead product}
\subjclass[2010]{Primary: 55Q05, 55Q15, 55Q20; secondary: 55P65}
\maketitle

\section{Background and Preliminaries}
Let $X$ be a pointed connected topological space. For any prime $p$, write $\pi_k(X;p)$ for the $p$-primary component of the $k$-th homotopy
group $\pi_k(X)$ for $k\ge 1$. The {\it $p$-primary homotopy exponent} of $X$ is the least integer $\exp_p(X)=p^t$, if it exists, so that $\alpha^{p^t} =1$
for all  elements $\alpha$ in $\pi_{\ast}(X;p)$. If such an integer does not exist or there are no $p$-torsions in $\pi_{\ast}(X)$, we set
$\exp_p(X)=1$. The homotopy exponent problem for spheres has been studied extensively and has a long history.
First, James \cite{james1} has shown that $\exp_2(\S^{2n+1})\le 2^{2n}$ then Toda has proved that $\exp_p(\S^{2n+1})\le p^{2n}$ for $p>2$
and Selick \cite{selick1} has shown that  $\exp_p(\S^3)=p$.

For $p=2$, an upper bound  $\exp_2(\S^{2n+1})\le 2\exp_2(\S^{2n-1})$ has been obtained by Selick \cite{selick}
which combined with James' result \cite{james1} yields $\exp_2(\S^{2n+1})\le 2^{(\frac{3}{2}n)+\varepsilon}$,
where $\varepsilon=\begin{cases}0\;\mbox{if}\;n\equiv 0\,(\bmod\,2),\\
\frac{1}{2}\;\mbox{if}\;n\equiv 1\,(\bmod\,2).
\end{cases}$
\par We notice that the result of Gray \cite{gray} gives a lower bound on the best exponent which combines with the above shows that
$\exp_p(\S^{2n+1})=p^n$ is the best exponent for $p>2$.
\par After that Cohen-Moore-Neisendorfer showed in their seminal work \cite{CMN} that $\exp_p(\S^{2n+1})=p^n$ for $p>3$ and then subsequently,
Neisendorfer \cite{N1} established the same exponent for $p=3$.
\par  Given a group $G$, the {\it $p$-primary exponent} of $G$ is the least integer $\exp_p(G)=p^t$, if it exists, such that ${\alpha}^{p^t}=1$ for any $p$-torsion element
$\alpha$ in $G$. Again, if such an integer does not exist or there are no $p$-torsions in $G$, we set
$\exp_p(G)=1$.

\par The so-called {\em total Cohen groups} $[J(X), \Omega(Y)]$ play an important role in various aspects of the classical non-stable homotopy theory. First introduced by F. Cohen as a tool to tackle the Barratt conjecture, these Cohen groups have found many connections to other aspects in topology and algebra. For instances,
the Cohen groups were used in \cite{SW} to show that the functorial homotopy decompositions of loop space of co-H-spaces is equivalent to the functorial coalgebra decompositions of tensor algebra functor.  This establishes a fundamental connection between homotopy theory (of loops on co-H-spaces) and the modular representation theory of Lie powers. In \cite{Wu1}, modified Cohen groups were used to give bounds on homotopy exponents of $\Omega^2(\S^n_{(2)})$. Moreover, the Cohen groups were used in \cite{Wu2} to produce a ring  so that double loop spaces are modules over this ring in the homotopy category. 

The main objective of this paper is to study  $p$-primary exponents of the total Cohen groups $[J(\S^r), \Omega(Y)]=[\Omega (\S^{r+1}),
\Omega(Y)]$. Here, $J(X)=\mbox{colim}_{n\ge 1}J_n(X)$ denotes the James construction of $X$. It is known that $[J(\S^1), \Omega(Y)]$ is in a one-to-one correspondence with the direct product
$\displaystyle{\prod_{i\ge 2}\pi_i(Y)}$ {\it as sets} but the group structure of $[J(\S^1),\Omega(Y)]$ is far from being abelian.
In fact, $\pi_i(Y)$ is {\it not} even a subgroup of $[J(\S^1),\Omega(Y)]$ in general. Nevertheless, it is natural to ask how the
$p$-torsion elements of $\pi_{\ast}(Y)$ are related to the $p$-torsion elements of $[J(\S^1),\Omega(Y)]$.
In particular, we study $\exp_p([J(\S^r), \Omega(Y)])$ when $Y$ is the $n$-th sphere $\S^n$, a space with the homotopy type of $\Sigma(n)/G$
for a homotopy $n$-sphere $\Sigma(n)$ with a free action of a discrete group $G$, the complex projective space $\mathbb{C}P^n$ for $n\ge 1$ or
the quaternionic projective space $\mathbb{H}P^n$  for $1\le n\le \infty$.

Throughout the rest of this paper we do not distinguish between a map and its homotopy class and we follow freely  notations from the book of Toda \cite{Toda}.

\par This paper is organized as follows. Section 1 recalls some known results on the homotopy exponents of spheres, iterated Whitehead
products of spheres, and the group structure of $[\Omega (\S^{r+1}), \Omega(Y)]$ for $r\ge 1$.
\par Section 2 is devoted to proving the main result on the exponents of $[\Omega (\S^{r+1}), \Omega(\S^N)]$. More precisely, we prove:

\noindent
{\bf Theorem \ref{main}.}  {\em Let $p$ be a prime.
\begin{enumerate}
\item If $p\ne 2$ then $$\exp_p([\Omega (\S^{r+1}), \Omega (\S^N)])=\exp_p(\S^N).$$
\item If $p=2$ then
$$\exp_2([\Omega (\S^{r+1}), \Omega (\S^N)]) \left\{
\aligned
& =\exp_2(\S^N) & \text{if $N$ is odd;} \\
& \le 2\exp_2(\S^N) & \text{if $N$ is even.}
\endaligned
\right.
$$
\end{enumerate}}

In Section 3, we further investigate the exponents of $[\Omega (\S^{r+1}), \Omega(Y)]$ when $Y$ is a space with the homotopy type of $\Sigma(n)/G$,
the complex projective space $\mathbb{C}P^n$ or the quaternionic projective space $\mathbb{H}P^n$ (including $\mathbb{H}P^{\infty}$).
These results are stated in Theorems \ref{form}, \ref{complex}, \ref{quater} and \ref{quaternionic}.

\subsection{Homotopy exponents}
After the work of James \cite{james}, the first major breakthrough in the homotopy exponent problem is the result of
Cohen-Moore-Neisendorfer \cite{CMN}. Shortly thereafter, Neisendorfer \cite{N1} obtained the same result for prime $p=3$.
For odd spheres, we have the following:

\begin{theorem}[\cite{CMN}, \cite{N1}] If $p$ is an odd prime then
$$
\exp_p(\S^{2n+1})=p^n
$$
for any $n\ge 1$.
\end{theorem}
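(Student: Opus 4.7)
The plan is to combine Gray's lower bound $\exp_p(\S^{2n+1})\ge p^n$ from \cite{gray}, noted in the excerpt, with a matching upper bound $\exp_p(\S^{2n+1})\le p^n$, so all effort concentrates on the upper bound. Following \cite{CMN} for $p\ge 5$ and \cite{N1} for $p=3$, the strategy is to construct, after localization at the odd prime $p$, a map
$$
\varphi\colon \Omega^2 \S^{2n+1}_{(p)} \longrightarrow \S^{2n-1}_{(p)}
$$
such that the composition $E^2\circ \varphi\colon \Omega^2 \S^{2n+1}_{(p)} \to \Omega^2 \S^{2n+1}_{(p)}$ with the double suspension is homotopic to the $p$-th power self-map. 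Granted $\varphi$, an induction on $n$ starting from Selick's theorem $\exp_p(\S^3)=p$ yields the bound: for $\af\in \pi_k(\S^{2n+1})_{(p)}$ with $k>2n+1$, the adjoint $\wt\af\colon \S^{k-2}\to \Omega^2\S^{2n+1}_{(p)}$ satisfies $p\cdot \wt\af = E^2\circ (\varphi\circ \wt\af)$; since $\varphi\circ \wt\af \in \pi_{k-2}(\S^{2n-1})_{(p)}$ is annihilated by $p^{n-1}$ by the inductive hypothesis, we obtain $p^n\cdot \wt\af = 0$, and hence $p^n\cdot\af=0$.

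To build $\varphi$, I would start from the cofibre sequence
$$
\S^{2n+1}\xrightarrow{\;p\;}\S^{2n+1}\longrightarrow P^{2n+2}(p),
$$
where $P^m(p)=\S^{m-1}\cup_p e^m$ is the mod-$p$ Moore space. Looping produces a map from the homotopy fibre of multiplication by $p$ on $\S^{2n+1}$ into $\Omega P^{2n+2}(p)$. By the Bott-Samelson theorem, $H_\ast(\Omega P^{2n+2}(p);\Z/p)$ is the tensor algebra on two generators in dimensions $2n$ and $2n+1$; the Poincar\'e-Birkhoff-Witt theorem then splits this algebra as a tensor product indexed by a Hall basis of the underlying free Lie algebra, and a basis element of dimension $2n-1$ appears, suggesting a geometric $\S^{2n-1}_{(p)}$-retract of the loop space.

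The main obstacle, and the heart of the Cohen-Moore-Neisendorfer argument, is promoting this purely algebraic splitting to a $p$-local product decomposition of spaces. The factors must be realized as iterated Samelson products in $\Omega P^{2n+2}(p)$: for $p\ge 5$ the James-Hopf invariants furnish compatible retractions that assemble into the desired decomposition. For $p=3$ a secondary-operation obstruction blocks the direct approach, and \cite{N1} resolves it by passing to higher mod-$p^r$ Moore spaces $P^{2n+2}(p^r)$ and invoking differential graded Lie algebra techniques to tame the obstruction. Projecting the decomposition onto the distinguished $\S^{2n-1}_{(p)}$ retract and pre-composing with the canonical map $\Omega^2\S^{2n+1}_{(p)}\to \Omega P^{2n+2}(p)$ induced by the cofibre sequence above defines $\varphi$; the identity $E^2\circ \varphi \simeq [p]$ is then a Hurewicz-level computation on the bottom cell, and the inductive argument sketched in the first paragraph closes the proof.
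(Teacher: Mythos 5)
The paper offers no proof of this statement: it is imported wholesale from \cite{CMN} and \cite{N1}, so your attempt can only be measured against the actual Cohen--Moore--Neisendorfer/Neisendorfer argument. Your skeleton does match it: Gray's lower bound, plus an upper bound obtained by factoring the $p$-th power map on $\Omega^2\S^{2n+1}_{(p)}$ as $E^2\circ\varphi$ for a map $\varphi\colon\Omega^2\S^{2n+1}_{(p)}\to\S^{2n-1}_{(p)}$, with the induction anchored by Selick's $\exp_p(\S^3)=p$. The inductive step in your first paragraph is correct granted $\varphi$.

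The construction of $\varphi$ is where your sketch breaks, and the error is concrete. By Bott--Samelson, $H_*(\Omega P^{2n+2}(p);\Z/p)$ is the tensor algebra $T(u,v)$ on generators of degrees $2n$ and $2n+1$; every positive degree in which it is nonzero is a sum of $2n$'s and $(2n+1)$'s, so it vanishes in degree $2n-1$ and no basis element of that dimension ``appears'' in the PBW splitting. Hence no $\S^{2n-1}$ retract of $\Omega P^{2n+2}(p)$ can be extracted this way; what does split off $\Omega P^{2n+2}(p^r)$ is the fibre $\S^{2n+1}\{p^r\}$ of the degree-$p^r$ map, whose homology $\Z/p[u]\otimes\Lambda(v)$ is the PBW factor corresponding to the generators. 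The sphere $\S^{2n-1}$ you need lives in a different space: in $\Omega F^{2n+1}\{p^r\}$, where $F^{2n+1}\{p^r\}$ is the homotopy fibre of the \emph{pinch} map $P^{2n+1}(p^r)\to\S^{2n+1}$ (note the dimension shift, $P^{2n+1}(p^r)=\S^{2n}\cup_{p^r}e^{2n+1}$), whose loop space homology does carry a generator in degree $2n-1$. The CMN decomposition exhibits $\S^{2n-1}$ as a product factor of $\Omega F^{2n+1}\{p^r\}$, and $\varphi$ is the composite of the connecting map $\Omega^2\S^{2n+1}\to\Omega F^{2n+1}\{p^r\}$ with the projection onto that factor; only then is $E^2\circ\varphi\simeq p^r$ a bottom-cell computation. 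With your indexing the connecting map of the pinch fibration would instead start from $\Omega^2\S^{2n+2}$, so even the parity is off as written. This is not a cosmetic slip: establishing that product decomposition (and overcoming its failure by naive methods at $p=3$) is precisely the content of \cite{CMN} and \cite{N1}, and your sketch points the whole construction at the wrong space.
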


For $p=2$, James \cite{james} showed that the odd spheres $\mathbb{S}^{2n+1}$ have the $2$-primary exponent 
less than equal to $4^n$. Moreover, he showed that the $2$-primary homotopy exponent increases at most by a factor of $4$ as one passes from $\mathbb{S}^{2n-1}$ to $\mathbb {S}^{2n+1}$. Selick \cite{selick} showed
that the $2$-primary homotopy exponent increases at most by a factor of $2$ as one passes from $\mathbb{S}^{4n-1}$ to $\mathbb {S}^{4n+1}$, thereby improving on the upper bound for $\exp_2(\S^{2n+1})$ previously obtained by James \cite{james} and by Cohen \cite{C}.

James has shown the existence of a weak homotopy equivalence $$\Sigma J(X)\simeq \bigvee_{i=1}^\infty\Sigma X^{(i)},$$
where $X^{(i)}$ denotes the $i$-fold smash power of $X$. For the $n$-th sphere $X=\S^n$ one has $\Omega(\S^{n+1})\simeq J(\S^n)$ and the splitting map above leads to the projection maps $\Sigma\Omega\S^{n+1}\to \S^{kn+1}$ for $k\ge 0$ which are adjoint to the maps $$H_k : \Omega(\S^{n+1})\to \Omega(\S^{kn+1})$$
known as the {\em James-Hopf} maps.
\par Write $X_{(p)}$ for the localization of a topological space $X$ at the prime $p$. Then, recall the fibration
$$\mathbb{S}^n_{(2)}\stackrel{E}{\rightarrow} \Omega (\mathbb{S}^{n+1}_{(2)})\stackrel{H_2}{\rightarrow} \Omega (\mathbb{S}^{2n+1}_{(2)})$$
found by James \cite{james} and the fibrations
$$\widehat{\mathbb{S}}^{2n}_{(p)}\rightarrow \Omega (\mathbb{S}^{2n+1}_{(p)})\rightarrow \Omega (\mathbb{S}^{2np+1}_{(p)}),$$
and $$\mathbb{S}^{2n-1}_{(p)}\rightarrow \Omega (\widehat{\mathbb{S}}^{2n}_{(p)})\rightarrow \Omega (\mathbb{S}^{2np-1}_{(p)})$$
found by Toda \cite{Toda} for $p>2$, where $\widehat{\mathbb{S}}^{2n}$ is the $(2np-1)$-skeleton of the loop space $\Omega(\mathbb{S}^{2n+1})$.
Thus, the fibrations above and the Serre result \cite{serre} lead to:
\begin{theorem} \mbox{\em (1)} The fibre of the James-Hopf map $H_2 : \Omega(\S^{2n})\to\Omega(\S^{4n-1})$ is $\S^{2n-1}$ and there is an odd primary equivalence $($due to Serre$)$
$$\Omega(\S^{2n})\simeq\S^{2n-1}\times\Omega(\S^{4n-1}).$$\label{raven}
\mbox{\em (2)} The $p$-local fibre of $H_p : \Omega(\S^{2n+1})\to \Omega(\S^{2pn+1})$ is $J_{p-1}(\S^{2n})$ for any prime $p$ $($due to James for $p=2$ and
Toda for $p>2$$)$.
\end{theorem}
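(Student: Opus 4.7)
The plan is to deduce both parts directly from the fibrations of James and Toda recalled in the excerpt, supplemented by a cell-counting observation about the James filtration.

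For part (1), I would apply James' integral fibration $\S^m \to \Omega(\S^{m+1}) \to \Omega(\S^{2m+1})$ with $m=2n-1$; this immediately yields the fibration $\S^{2n-1} \to \Omega(\S^{2n}) \xrightarrow{H_2} \Omega(\S^{4n-1})$ and hence identifies the fibre. The odd-primary splitting is the classical Serre retraction: I would invoke the Whitehead square $w=[\iota_{2n},\iota_{2n}] \in \pi_{4n-1}(\S^{2n})$, whose James-Hopf invariant is $\pm 2$, so after localization at an odd prime $p$ the class $\tfrac{1}{2}w$ exists and represents a $p$-local map $\S^{4n-1} \to \S^{2n}_{(p)}$ of Hopf invariant one. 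Looping and extending through the James construction produces a $p$-local section of $H_2$, which together with $E: \S^{2n-1} \to \Omega(\S^{2n})$ yields the product decomposition $\Omega(\S^{2n})_{(p)} \simeq \S^{2n-1}_{(p)} \times \Omega(\S^{4n-1})_{(p)}$.

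For part (2), the case $p=2$ is immediate from James' integral fibration applied to $X=\S^{2n}$, since $J_1(\S^{2n})=\S^{2n}$. For $p$ odd, Toda's fibration recalled above gives $\widehat{\S}^{2n}_{(p)} \to \Omega(\S^{2n+1}_{(p)}) \to \Omega(\S^{2np+1}_{(p)})$, where $\widehat{\S}^{2n}_{(p)}$ is defined as the $(2np-1)$-skeleton of $\Omega(\S^{2n+1})_{(p)}$. It remains to identify this skeleton with $J_{p-1}(\S^{2n})_{(p)}$. Since $\Omega(\S^{2n+1})$ is exhausted by the James filtration $\{J_k(\S^{2n})\}_{k\ge 0}$, with $J_k(\S^{2n})$ obtained from $J_{k-1}(\S^{2n})$ by attaching a single cell in dimension $2nk$, no cells appear in dimensions $2n(p-1)+1,\dots,2np-1$, so the $(2np-1)$-skeleton coincides with $J_{p-1}(\S^{2n})$.

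The main obstacle is justifying the odd-primary splitting in (1), namely the passage from a $p$-local Hopf-invariant-one map on the bottom cell to a genuine $p$-local section of the loop map $H_2$; this is the content of Serre's classical theorem and cannot be bypassed by formal arguments. Once that is granted, both parts follow by routine assembly of the James and Toda fibrations together with the cell-counting observation on the James filtration.
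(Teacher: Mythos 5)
Your argument assembles exactly the ingredients the paper itself invokes---James' fibration, Serre's odd-primary splitting obtained from half the Whitehead square $[\iota_{2n},\iota_{2n}]$, and Toda's fibration together with the identification of the $(2np-1)$-skeleton of $\Omega(\S^{2n+1})$ with $J_{p-1}(\S^{2n})$ by cell counting---so it is correct and takes essentially the same route; the paper simply states the theorem as a direct consequence of these cited fibrations without spelling out the details you supply. One cosmetic caveat: James' fibration is in general only a $2$-local fibration (as the paper's displayed version with the subscript $(2)$ indicates), and it is integral precisely in the even-sphere case you use ($m=2n-1$ odd), which is worth saying explicitly when you call it the ``integral fibration.''
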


For even-dimensional spheres, Theorem \ref{raven}(1) gives the torsions at odd primes $p$ in terms of those of odd-dimensional spheres, using
\begin{equation}\label{expon}\pi_m(\mathbb{S}^{2n};p)\cong \pi_{m-1}(\mathbb{S}^{2n-1};p)\oplus \pi_m(\mathbb{S}^{4n-1};p)
\end{equation}
for $m\ge 2n$, where $\pi_m(X;p)$ stands for the $p$-primary homotopty component of the $m$-th homotopy group $\pi_m(X)$ of a space $X$.
This implies that $\mathbb{S}^{2n}$ has the $p$-primary homotopy exponent $p^{2n-1}$.

The $EHP$ sequences associated to the fibration
$$\mathbb{S}^{2n-1}_{(2)}\stackrel{E}{\rightarrow} \Omega (\mathbb{S}^{2n}_{(2)})\stackrel{H}{\rightarrow} \Omega (\mathbb{S}^{4n-1}_{(2)})$$
and James' result \cite{james} for $p=2$ show that the $2$-primary homotopy exponent $\exp_2(\S^{2n})$ is bounded by $4^{2n}$.
\par For spaces other than the spheres, Neisendorfer \cite[Corollary 0.2]{N2} showed that $\exp_p(P^n(p^r))\le p^{r+1}$ 
for Moore spaces $P^n(p^r)$ of type $(\mathbb Z_{p^r},n-1)$ with an odd prime $p$ and $n\ge 3$. For prime $p=2$, the problem has been investigated by Theriault \cite{T}.
Relying on results by James \cite{james1} and Toda \cite{Toda1}, Stanley reproved in \cite{stanley} Long's result \cite{long} that finite $H$-spaces have an exponent at any prime.
More recently, the homotopy exponent problem has also been studied for certain homogeneous spaces (see e.g., \cite{GZ}, \cite{ZZS}).

\subsection{Iterated Whitehead products of spheres}
Given $\alpha\in\pi_k(X)$ and $\beta\in\pi_l(X)$ with $k,l\ge 1$, write $[\alpha,\beta]\in \pi_{k+l-1}(X)$ for their Whitehead product.
\begin{proposition} {\em Let $\iota_n$ be the identity map of the $n$-sphere $\S^n$. Then:\label{wh}
\begin{enumerate}
\item $[\iota_n, \iota_n]$ has infinite order if $n$ is even, is trivial if $n =1,3,7$, and has order $2$ otherwise \mbox{\em (\cite[ (1.2)]{GM1})};
\item $[[\iota_n, \iota_n],\iota_n]$ has order  $3$ if $n$ is even  and is trivial otherwise \mbox{\em(\cite[Lemma 1.2 and (1.4)]{GM1})};
\item all  Whitehead products in $\iota_n$ of weight  $\geq 4$ vanish \mbox{\em (\cite[Chapter XI, (8.8) Theorem]{Wh})};
\item if $\alpha\in\pi_k(X)$, $\beta\in\pi_l(X)$ and $[\alpha,\beta]=0$ then $[\alpha\circ\delta,\beta\circ\delta^\prime]=0$ for $\delta\in\pi_s(\S^k)$ and $\delta^\prime\in\pi_t(\S^l)$ \mbox{\em (\cite[Chapter X,  (8.14) Theorem]
{Wh})};

\item  $($Jacobi identity$)$ If  $\alpha\in \pi_{p+1}(X)$, $\beta \in \pi_{q+1}(X)$, $\gamma\in \pi_{r+1}(X)$,  and $p$, $q$, $r$ are all
positive, then
$$(-1)^{(p+1)(r+1)}[[\alpha, \beta], \gamma] + (-1)^{(q+1)(p+1)}[[\beta, \gamma], \alpha] + (-1)^{(q+1)(r+1)}[[\gamma, \alpha], \beta]=0$$  \mbox{\em (\cite[Chapter X,  (7.14) Corollary]{Wh})};

\item if $\alpha\in\pi_k(X)$, $\beta\in\pi_l(X)$ and $\delta\in\pi_s(\S^{k-1})$, $\delta'\in\pi_t(\S^{l-1})$
then $[\alpha\circ \Sigma\delta,\beta\circ \Sigma \delta']=[\alpha,\beta]\circ \Sigma(\delta\wedge\delta')$ \mbox{\em (\cite[Chapter X, (8.18) Theorem]{Wh}).}
\end{enumerate}}
\end{proposition}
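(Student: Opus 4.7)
The plan is to verify each of the six items by appealing to its cited source, since this proposition compiles classical facts about Whitehead products on spheres rather than introducing anything new. The only genuine content is in items (1)--(3); the remaining three are formal bracket identities.

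For (1), I would invoke the classical Hopf-invariant argument: for $n$ even, the James--Hopf invariant of $[\iota_n,\iota_n]$ equals $\pm 2$, which forces infinite order; for $n=1,3,7$, the fact that $\S^n$ admits an $H$-space structure (equivalently, a Hopf invariant one element exists) gives $[\iota_n,\iota_n]=0$; for the remaining odd $n$, the graded skew-symmetry of the Whitehead bracket yields $2[\iota_n,\iota_n]=0$, while Adams' Hopf invariant one theorem ensures $[\iota_n,\iota_n]\ne 0$, as recorded in \cite{GM1}. For (2), applying the Jacobi identity of item (5) with $\alpha=\beta=\gamma=\iota_n$ and $p=q=r=n-1$ yields $3[[\iota_n,\iota_n],\iota_n]=0$ regardless of parity, since the three signs $(-1)^{n^2}$ agree; combined with the order-two statement from (1) and the naturality in (4), the triple bracket vanishes for odd $n$, while for even $n$ one verifies nontriviality by the computation in \cite{GM1}. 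For (3), the vanishing of weight $\ge 4$ iterated brackets is proved in \cite[Chapter XI]{Wh} by reducing, via Jacobi and skew-symmetry, every such bracket to a multiple of a triple bracket covered by (2), which is then either trivial or has order dividing $3$, and an additional parity/skew-symmetry argument forces it to vanish in weight four and higher.

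Items (4)--(6) are formal identities for the Whitehead bracket established in \cite[Chapter X]{Wh} by direct homotopy constructions: (4) is naturality of the bracket under precomposition by elements of the homotopy groups of the spherical source factors, (5) is the graded Jacobi identity (proved by exhibiting explicit nullhomotopies on the three cyclic composites), and (6) is the compatibility of the bracket with composition after suspension via the smash product, obtained from the James filtration of the product space. The main obstacle, were one to attempt a fully self-contained treatment rather than citing, would be pinning down the order of $[\iota_n,\iota_n]$ in (1) for odd $n\ne 1,3,7$, which rests on Adams' Hopf-invariant-one theorem and is genuinely deep; once (1) and the Jacobi identity are in hand, every remaining item reduces to routine manipulation.
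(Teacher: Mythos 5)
The paper offers no proof of this proposition at all---each item is simply attributed to \cite{GM1} or \cite{Wh}---and your proposal does essentially the same, supplementing the citations with accurate sketches of the classical arguments (Hopf invariant $\pm 2$ for even $n$, the $H$-space structure for $n=1,3,7$, graded skew-symmetry plus Adams' Hopf-invariant-one theorem for the remaining odd $n$, and the Jacobi identity specialized to $\alpha=\beta=\gamma=\iota_n$ giving $3[[\iota_n,\iota_n],\iota_n]=0$). Your treatment is correct and matches the paper's approach of deferring to the literature, so there is nothing further to reconcile.
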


To state the next results on Whitehead products, first recall from \cite[Chapter II]{baues} that the \textit{exterior cup products} are pairings
$$\sharp, \,\underline{\sharp}\colon[\Sigma X, \Sigma A]\times[\Sigma Y, \Sigma B]\longrightarrow [\Sigma X\wedge Y, \Sigma A\wedge B]$$
defined by the compositions $$\alpha\sharp\beta\colon \Sigma X\wedge Y\stackrel{\alpha\wedge Y}{\to}\Sigma A\wedge Y=A\wedge \Sigma Y
\stackrel{A\wedge\beta}{\to} A\wedge \Sigma B$$
and
$$\alpha\underline{\sharp}\beta\colon \Sigma X\wedge Y=X\wedge \Sigma Y\stackrel{X\wedge\beta}{\to}X\wedge \Sigma B=\Sigma X\wedge B
\stackrel{\alpha\wedge B}{\to} \Sigma A\wedge B,$$
respectively for $(\alpha,\beta)\in [\Sigma X, \Sigma A]\times[\Sigma Y, \Sigma B]$, where $\alpha\wedge Y$ is the map $\alpha\wedge\text{id}_Y$ and $A\wedge\beta$ is the
map $\text{id}_{\Sigma A}\wedge\beta$, up to
the shuffle of the suspension coordinate. These products are associative.
\par Let  $h_k\colon \pi_m(\S^{n+1}) \to   \pi_m(\S^{kn+1})$ be the map induced by the James-Hopf $k$-invariant $H_k : \Omega(\S^{n+1})\to \Omega(\S^{kn+1})$ for $k>1$.
Then, by \cite[Chapter III,  (1.4) Proposition and (1.5) Corollary]{baues}, we have:
\begin{proposition}\label{baues}   {\em   Let $\alpha_i\in \pi_{m_i}(\S^n)$  for  $i=1,2,3$. Then we have the following formulas:
\begin{equation*}
\begin{aligned}
&\mbox{\em (1)}\;\;\;\; [\alpha_1, \alpha_2]=[\iota_n,\iota_n] \circ (\alpha_1 \underline{\sharp} \alpha_2)+(-1)^{n-1}[\iota_n,[\iota_n,\iota_n]]\circ ((h_2\alpha_1)\underline \sharp\alpha_2)+[\iota_n,[\iota_n,\iota_n]]\circ (\alpha_1\underline \sharp h_2\alpha_2), \\
&\mbox{\em (2)}\;\;\;\;[[\alpha_1, \alpha_2], \alpha_3]=[[\iota_n,\iota_n],\iota_n]\circ(\alpha_1\sharp \alpha_2\sharp \alpha_3).
\end{aligned}
\end{equation*}}
\end{proposition}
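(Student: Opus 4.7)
My approach is to reduce both identities to the universal case by means of the James splitting $\Sigma J(\S^n)\simeq \bigvee_{k\ge 1}\S^{kn+1}$, and then to exploit the vanishing of every Whitehead product of weight $\ge 4$ in $\iota_n$ (Proposition \ref{wh}(3)) to kill all but a handful of terms.

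For (1), I would first invoke the James splitting to write each $\alpha_i\colon \S^{m_i}\to \S^n$, after one suspension, as a sum of the form $\sum_k w_k\circ h_k\alpha_i$, where $w_k$ denotes an iterated Whitehead product in $\iota_n$ of weight $k$; by Proposition \ref{wh}(3) only $k=1,2,3$ contribute. Expanding $[\alpha_1,\alpha_2]$ using the bilinearity of the Whitehead product together with Proposition \ref{wh}(4) and (6), the leading summand is $[\iota_n,\iota_n]\circ(\alpha_1\underline{\sharp}\alpha_2)$; the cross terms pairing $\iota_n$ with $[\iota_n,\iota_n]$ produce the weight-$3$ corrections $(-1)^{n-1}[\iota_n,[\iota_n,\iota_n]]\circ((h_2\alpha_1)\underline{\sharp}\alpha_2)$ and $[\iota_n,[\iota_n,\iota_n]]\circ(\alpha_1\underline{\sharp}h_2\alpha_2)$, with the sign $(-1)^{n-1}$ coming from shuffling a suspension coordinate past an $(n-1)$-dimensional smash factor; every remaining cross term has weight $\ge 4$ in $\iota_n$ and so vanishes.

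For (2), I would apply (1) to the pair $([\alpha_1,\alpha_2],\alpha_3)$ and then substitute (1) once more for the inner bracket $[\alpha_1,\alpha_2]$. Each resulting summand is a composite of a weight-$w$ Whitehead product in $\iota_n$ with a $\sharp$- or $\underline{\sharp}$-product of the $\alpha_i$'s and their $h_2$-images. The correction terms inherited from (1) all have $w\ge 4$, because each extra $h_2$ is accompanied by one additional nesting of $\iota_n$; hence by Proposition \ref{wh}(3) every such summand vanishes, leaving only $[[\iota_n,\iota_n],\iota_n]\circ(\alpha_1\sharp \alpha_2\sharp \alpha_3)$. The replacement of $\underline{\sharp}$ by $\sharp$ in this surviving term is absorbed by the associativity of the exterior cup products, the two pairings differing only by a shuffle of the suspension coordinate that is compatible with the bracket $[[\iota_n,\iota_n],\iota_n]$.

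The main obstacle will be the sign and cup-product bookkeeping. Distinguishing $\sharp$ from $\underline{\sharp}$ amounts to tracking the suspension coordinate across smash factors whose parities depend on $n$ and the $m_i$'s, and the sign $(-1)^{n-1}$ in (1) must be reconciled with the Jacobi identity of Proposition \ref{wh}(5) whenever one rewrites $[\iota_n,[\iota_n,\iota_n]]$ as a multiple of $[[\iota_n,\iota_n],\iota_n]$. For (2) the collapse of the Hopf-invariant corrections is conceptually forced by the weight count, but one still has to rule out accidental survivors coming from reassociation of the iterated brackets and from the asymmetry between the two exterior pairings.
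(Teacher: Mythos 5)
The paper offers no proof of this proposition: it is quoted directly from Baues, Ch.~III, (1.4) Proposition and (1.5) Corollary, so any self-contained argument already does more than the text. That said, your sketch has a genuine gap at its very first step. You propose to write each individual $\alpha_i\in\pi_{m_i}(\S^n)$, ``after one suspension'', as a sum $\sum_k w_k\circ h_k\alpha_i$ with $w_k$ an iterated Whitehead product in $\iota_n$ of weight $k$. No such decomposition of a single homotopy element exists: the James splitting $\Sigma J(\S^{n-1})\simeq\bigvee_{k\ge1}\S^{k(n-1)+1}$ produces the James--Hopf invariants $h_k\alpha_i$ as coordinates of a map \emph{out of} $\Omega(\S^n)$, but it does not reassemble $\alpha_i$ from them (if it did, the $EHP$ sequence would split integrally). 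Worse, after one suspension all Whitehead products vanish, so the proposed identity collapses to $\Sigma\alpha_i=\Sigma\alpha_i$ and retains none of the unstable correction terms that the proposition is actually about.

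The expansion that drives the proof is not of the $\alpha_i$ but of the universal Whitehead product in a wedge: by naturality $[\alpha_1,\alpha_2]=\nabla_*[i_1\circ\alpha_1,\,i_2\circ\alpha_2]$, where $i_1,i_2\colon\S^n\to\S^n\vee\S^n$ are the inclusions and $\nabla$ the fold map, and one expands $[i_1\alpha_1,i_2\alpha_2]$ via Hilton's theorem for $\pi_*(\S^n\vee\S^n)$ over the basic products $[x_1,x_2]$, $[[x_1,x_2],x_1]$, $[[x_1,x_2],x_2],\dots$; their Hilton--Hopf coordinates are precisely the exterior cup products $\alpha_1\underline{\sharp}\alpha_2$, $(h_2\alpha_1)\underline{\sharp}\alpha_2$, $\alpha_1\underline{\sharp}h_2\alpha_2$ occurring in (1), and applying $\nabla_*$ together with Proposition \ref{wh}(3) kills everything of weight $\ge4$. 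Your outline for (2) --- iterate (1) and count weights --- is the right instinct, but it additionally requires a formula for $h_2[\alpha_1,\alpha_2]$ and a genuine comparison of $\sharp$ with $\underline{\sharp}$; these are not ``absorbed by associativity'' but are separate identities established in Baues, Ch.~II--III. As written, the argument cannot be completed without replacing its opening step by the wedge/Hilton decomposition (or simply citing Baues, as the paper does).
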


Next, \cite[ Chapter III, (1.8) and (1.9) Corollaries]{baues} yield the following result proved by Hilton \cite{hilton}.
\begin{proposition} \label{p} {\em Let    $\alpha_1\in \pi_k(\S^m)$ for
$m> 1$ and  $\alpha_2,\alpha_3\in\pi_m(\S^n)$. Then
$$(\alpha_2+\alpha_3)\circ \alpha_1 = \alpha_2\alpha_1 + \alpha_3\alpha_1 + [\alpha_2,\alpha_3]\circ h_2(\alpha_1).$$
\par In particular, if $t\in\Z$ then
$$(t\iota_m)\circ \alpha = t \alpha +\left (\frac{t(t-1)}{2}\right)[\iota_m,\iota_m] h_2(\alpha)$$
for  $\alpha\in \pi_k(\S^m)$.}
\end{proposition}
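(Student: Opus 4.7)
The plan is to express $\alpha_2+\alpha_3$ via the pinch map and expand $\nabla\circ\alpha_1$ inside $\pi_k(\S^m\vee\S^m)$ using the Hilton-Milnor decomposition, with the James-Hopf invariant $H_2$ recording the only nontrivial correction to linearity. The ``in particular'' statement will then follow by induction on $t$.

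Concretely, I would write $\alpha_2+\alpha_3=\langle\alpha_2,\alpha_3\rangle\circ\nabla$, where $\nabla\colon\S^m\to\S^m\vee\S^m$ is the standard comultiplication and $\langle\alpha_2,\alpha_3\rangle\colon\S^m\vee\S^m\to\S^n$ is the folding of the pair. Then $(\alpha_2+\alpha_3)\circ\alpha_1=\langle\alpha_2,\alpha_3\rangle\circ(\nabla\circ\alpha_1)$, so the problem reduces to computing $\nabla\circ\alpha_1$. By the Hilton-Milnor theorem, $\pi_k(\S^m\vee\S^m)$ splits as a direct sum indexed by basic products in the inclusions $i_1,i_2\colon\S^m\to\S^m\vee\S^m$, with projections given by iterated James-Hopf invariants. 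Since each retraction $p_j\colon\S^m\vee\S^m\to\S^m$ satisfies $p_j\circ\nabla=\iota_m$, the $i_j$-components of $\nabla\circ\alpha_1$ are both $\alpha_1$; by construction of $H_2$, the $[i_1,i_2]$-component in $\pi_k(\S^{2m-1})$ is $h_2(\alpha_1)$. Hence
$$\nabla\circ\alpha_1 \;=\; i_1\alpha_1 + i_2\alpha_1 + [i_1,i_2]\circ h_2(\alpha_1) + R,$$
where $R$ collects weight-$\ge 3$ basic products composed with higher Hopf invariants of $\alpha_1$.

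Applying $\langle\alpha_2,\alpha_3\rangle$, which sends $i_1\mapsto\alpha_2,\ i_2\mapsto\alpha_3$ and preserves Whitehead products, one obtains
$$(\alpha_2+\alpha_3)\circ\alpha_1=\alpha_2\alpha_1+\alpha_3\alpha_1+[\alpha_2,\alpha_3]\circ h_2(\alpha_1)+R',$$
where $R'$ is a sum of iterated Whitehead brackets of weight $\ge 3$ in $\alpha_2,\alpha_3$ postcomposed with higher Hopf invariants of $\alpha_1$. The main obstacle is showing $R'=0$. Iterating Proposition \ref{baues}(2), each such bracket rewrites as an iterated Whitehead product in $\iota_n$ of the same weight composed with an exterior product of the $\alpha_i$'s. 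Proposition \ref{wh}(3) annihilates all weight-$\ge 4$ contributions, while the residual weight-$3$ contributions are precisely those absorbed in the passage from Corollary (1.8) to Corollary (1.9) of \cite[Chapter III]{baues}; this completes the first identity.

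For the particular case, I would induct on $|t|$: the cases $t=0,1$ are immediate. For $t\ge 2$, write $t\iota_m=(t-1)\iota_m+\iota_m$ and apply the first formula with $\alpha_1=\alpha$. Bilinearity of the Whitehead product gives $[(t-1)\iota_m,\iota_m]=(t-1)[\iota_m,\iota_m]$, and the inductive hypothesis then yields
$$(t\iota_m)\circ\alpha = t\alpha+\left(\tfrac{(t-1)(t-2)}{2}+(t-1)\right)[\iota_m,\iota_m]\circ h_2(\alpha),$$
with the coefficient collapsing to $\tfrac{t(t-1)}{2}$. The case $t<0$ is handled analogously by expanding $0=(\iota_m+(-\iota_m))\circ\alpha$ and solving for $(-\iota_m)\circ\alpha$.
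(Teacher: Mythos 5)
The paper offers no proof of this proposition at all: it is quoted from \cite[Chapter III, (1.8) and (1.9) Corollaries]{baues} and attributed to Hilton, so there is no in-text argument to compare yours against. Your Hilton--Milnor skeleton is the standard route to such a formula, and your induction on $t$ for the ``in particular'' clause is correct \emph{granted} the first identity (including the $t<0$ case via $(\iota_m+(-\iota_m))\circ\alpha=0$).

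The gap sits in the first identity, at exactly the point where all the content lies: the vanishing of the remainder $R'$. Your expansion correctly yields
$$(\alpha_2+\alpha_3)\circ\alpha_1=\alpha_2\alpha_1+\alpha_3\alpha_1+[\alpha_2,\alpha_3]\circ h_2(\alpha_1)+R',$$
with $R'$ a sum of terms $w(\alpha_2,\alpha_3)\circ h_w(\alpha_1)$ over basic products $w$ of weight $\ge 3$. Proposition \ref{wh}(3) does kill the weight $\ge 4$ part, but the weight-$3$ part does not die for free: by Propositions \ref{baues}(2) and \ref{three}(1) the brackets $[[\alpha_2,\alpha_3],\alpha_i]$ are only $3$-torsion, and $[[\iota_n,\iota_n],\iota_n]$ genuinely has order $3$ for $n$ even, so one must either show that the weight-$3$ Hilton--Hopf invariants $h_w(\alpha_1)\in\pi_k(\S^{3m-2})$ contribute nothing here, or retain the cubic correction terms in the statement. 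Your sentence that these contributions ``are precisely those absorbed in the passage from Corollary (1.8) to Corollary (1.9)'' of Baues is not an argument; it defers the only nontrivial step back to the very citation the proposition rests on. Either supply a proof that the weight-$3$ contribution vanishes under the stated hypotheses, or prove the identity modulo weight-$3$ iterated Whitehead products in $\alpha_2,\alpha_3$ --- a weaker statement that would still cover every use the paper makes of Proposition \ref{p}, since in those applications the cubic corrections become iterated products of weight $\ge 4$ in $\iota_n$ and vanish by Proposition \ref{wh}(3). A secondary, fixable point: the identification of the Hilton--Hopf invariant attached to $[i_1,i_2]$ with the James--Hopf invariant $h_2$ holds only up to sign and choice of ordering conventions, which must be pinned down if the coefficient $\frac{t(t-1)}{2}$ is to come out on the nose.
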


 Let $[\alpha_1,\ldots, \alpha_k]$ denote any possible iterated Whitehead product of weight $k$
of the elements  $\alpha_1,\ldots, \alpha_k$, where $\alpha_i\in \pi_{m_{i}}(\S^n)$ for $i=1,\ldots,k$.
Applying Propositions \ref{baues} and  \ref{p} we obtain the well-known result:

\begin{proposition}\label{three} {\em Let  $n\geq 2$.

\mbox{\em (1)}  If $\alpha_i\in \pi_{m_{i}}(\S^n)$ with $i=1,2,3$ then
 $3[[\alpha_1, \alpha_2], \alpha_3]=0;$

\mbox{\em (2)} all iterated Whitehead products  $[\alpha_1,\ldots, \alpha_k]$  of weight $k\geq 4$ vanish.}
 \end{proposition}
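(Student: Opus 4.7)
For Part (1), I would combine formula (2) of Proposition \ref{baues} with parts (2) and (3) of Proposition \ref{wh}. Formula (2) writes any triple Whitehead as $[[\alpha_1,\alpha_2],\alpha_3]=\omega\circ\eta$, where $\omega:=[[\iota_n,\iota_n],\iota_n]$ and $\eta:=\alpha_1\sharp\alpha_2\sharp\alpha_3$. By Proposition \ref{wh}(2) the head $\omega$ is either trivial (when $n$ is odd) or has order exactly $3$ (when $n$ is even); in both cases $3\omega=0$. To push this factor of $3$ across the composition I would apply Proposition \ref{p} twice: the correction terms that arise are multiples of $[\omega,\omega]\circ h_2(\eta)$, and $[\omega,\omega]$ is an iterated Whitehead product of $\iota_n$ of weight $6$, so it vanishes by Proposition \ref{wh}(3). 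Consequently $(3\omega)\circ\eta=3(\omega\circ\eta)$, and since $3\omega=0$ we conclude $3[[\alpha_1,\alpha_2],\alpha_3]=0$.

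For Part (2), the plan is to reduce everything to Proposition \ref{wh}(3) together with Proposition \ref{wh}(4). The central observation is that every iterated Whitehead product $W$ of weight $k$ in $\pi_\ast(\S^n)$ admits a finite sum decomposition $W=\sum_i A_i\circ\xi_i$ in which each $A_i$ is itself an iterated Whitehead product of $\iota_n$ whose weight I can control: for $k=1$ write $\alpha=\iota_n\circ\alpha$ with $A=\iota_n$ of weight $1$; for $k=2$, formula (1) of Proposition \ref{baues} produces three summands with $A\in\{[\iota_n,\iota_n],\,[\iota_n,[\iota_n,\iota_n]]\}$ of weight $2$ or $3$; for $k=3$, formula (2) yields a single summand with $A=[[\iota_n,\iota_n],\iota_n]$ of weight $3$. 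Now write any weight-$k$ iterated Whitehead with $k\ge 4$ as $W=[W',W'']$. If $|W'|\ge 4$ or $|W''|\ge 4$ the inductive hypothesis kills a factor, so I may assume $|W'|,|W''|\le 3$ (which forces $k\le 6$). Decomposing $W'$ and $W''$ as above and using bilinearity of the Whitehead product with Proposition \ref{wh}(4) yields $W=\sum_{i,j}[A_i\circ\xi_i,\,A_j'\circ\xi_j']$, and each summand is $0$ as soon as $[A_i,A_j']=0$. But $[A_i,A_j']$ is itself an iterated Whitehead product of $\iota_n$ of weight $|A_i|+|A_j'|$, and a case check shows this weight is always at least $4$ in every remaining case---the tightest is $k=4$ with $(|W'|,|W''|)=(1,3)$, yielding weights $1+3=4$---so Proposition \ref{wh}(3) forces $[A_i,A_j']=0$ and hence $W=0$.

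The main obstacle is the bookkeeping in Part (2): one has to enumerate the ``head'' weights of the $A_i$ that actually arise from formulas (1) and (2) (including the correction terms in formula (1), which raise the weight of $A$ from $2$ to $3$), and then verify in every remaining pairing that $|A_i|+|A_j'|\ge 4$ so Proposition \ref{wh}(3) applies; once this accounting is done, the induction closes. Part (1) is essentially just formula (2) combined with Proposition \ref{wh}(2), plus the routine check that the factor of $3$ passes through the composition.
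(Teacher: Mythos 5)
Your argument for Part (1) is essentially the paper's own proof: both factor the triple product through $[[\iota_n,\iota_n],\iota_n]$ via Proposition \ref{baues}(2), kill $3[[\iota_n,\iota_n],\iota_n]$ with Proposition \ref{wh}(2), and push the factor of $3$ across the composition using Proposition \ref{p}, the correction term dying because $[[[\iota_n,\iota_n],\iota_n],[[\iota_n,\iota_n],\iota_n]]$ is a weight-$6$ product of $\iota_n$.

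For Part (2) you take a genuinely different route. The paper handles weight $4$ by treating the shape $(3,1)$ directly --- Proposition \ref{wh}(3) gives $[[[\iota_n,\iota_n],\iota_n],\iota_n]=0$ and Proposition \ref{wh}(4) transports this along $\alpha_1\sharp\alpha_2\sharp\alpha_3$ and $\alpha_4$ --- and then reduces the shape $(2,2)$ to $(3,1)$ via the Jacobi identity of Proposition \ref{wh}(5); its inductive step for weight $\ge 5$ again leans on Jacobi to dispose of the shapes $(3,2)$ and $(3,3)$. You instead eliminate Jacobi entirely: every factor of weight $\le 3$ is rewritten, via Propositions \ref{baues}(1) and (2), as a sum of terms $A\circ\xi$ with $A$ a universal iterated Whitehead product of $\iota_n$ whose weight is at least the weight of the factor, and then bi-additivity of the Whitehead product together with Proposition \ref{wh}(4) reduces everything to $[A_i,A'_j]=0$, which Proposition \ref{wh}(3) supplies since $|A_i|+|A'_j|\ge 4$ in every surviving case (tight at $(1,3)$ and $(2,2)$). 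This buys a uniform, Jacobi-free treatment of all shapes at once and makes transparent why weight $4$ is the threshold; the only extra inputs beyond the paper's stated toolbox are the bi-additivity of the Whitehead product in degrees $\ge 2$ and graded antisymmetry (needed to put a weight-$3$ factor of the form $[\alpha_1,[\alpha_2,\alpha_3]]$ into the left-normed shape covered by Proposition \ref{baues}(2)), both standard facts from \cite{Wh}. The bookkeeping you flag does close exactly as you describe, so the argument is complete.
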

\begin{proof} (1):  If $n$ is even then by Proposition \ref{wh}(2) we have  $3[[\iota_n, \iota_n], \iota_n]=0$. Hence, for  $\alpha_i\in \pi_{m_i}(\S^n)$
 with $i=1,2,3$, we have
$$0=(3[[\iota_n,\iota_n],\iota_n])\circ(\alpha_1\sharp \alpha_2\sharp\alpha_3)=(2[[\iota_n,\iota_n],\iota_n]+[[\iota_n,\iota_n],\iota_n])\circ(\alpha_1\sharp \alpha_2\sharp\alpha_3).$$
By Proposition \ref{wh}(3) and Proposition \ref{p}, we obtain
\begin{equation*}
\begin{aligned}
(2[[\iota_n,\iota_n],\iota_n]+[[\iota_n,\iota_n],\iota_n])\circ(\alpha_1\sharp \alpha_2\sharp\alpha_3)&=
(2[[\iota_n,\iota_n],\iota_n])\circ(\alpha_1\sharp \alpha_2\sharp\alpha_3)+[[\iota_n,\iota_n],\iota_n]\circ(\alpha_1\sharp \alpha_2\sharp\alpha_3) \\
&=3 ([[\iota_n,\iota_n],\iota_n]\circ(\alpha_1\sharp \alpha_2\sharp\alpha_3)).
\end{aligned}
\end{equation*}
Since  $[[\alpha_1, \alpha_2], \alpha_3]=[[\iota_n,\iota_n],\iota_n]\circ(\alpha_1\sharp \alpha_2\sharp\alpha_3)$
by  Proposition \ref{baues}(2),  assertion (1) follows.

(2): Let $k=4$. Then, by Proposition \ref{wh}(3), we have $[[[\iota_n,\iota_n],\iota_n],\iota_n]]=0$.
Using Proposition \ref{wh}(4), we deduce that $[([[\iota_n,\iota_n],\iota_n])\circ(\alpha_1\sharp\alpha_2\sharp\alpha_3),\alpha_4]]=0$.
Hence, Proposition \ref{baues}(2) implies $[[[\alpha_1,\alpha_2],\alpha_3],\alpha_4]]=0$.

Furthermore, by the Jacobi identity of Proposition\ref{wh}(5), we have $$[[\alpha_1,\alpha_2],[\alpha_3,\alpha_4]]=
\pm [[[\alpha_1,\alpha_2],\alpha_3],\alpha_4]]\pm  [[[\alpha_1,\alpha_2],\alpha_4],\alpha_3]].$$
Thus, we conclude that $[[\alpha_1,\alpha_2],[\alpha_3,\alpha_4]]=0$.
Now the rest of the proof is an inductive argument.

Suppose that the result is true for Whitehead products of weight $m$ and let us show for $m+1$, where we will assume that $m+1\geq 5$. Since
$[\alpha_1,\ldots,\alpha_{m+1}]$ is of the form $[\theta_1, \theta_2]$, where $\theta_1$, $\theta_2$  are iterated Whitehead products of weight $k_1$ and  $k_2$, respectively with $k_1+k_2=m+1$.
If  one of the $k_{i}$'s is $\geq 4$, then the result follows by inductive hypothesis.  Otherwise   $k_1, k_2<4$, and we have two cases,
namely both are equal to $3$ or one is $3$ and the other is $2$. In both cases the result follows by inductive hypothesis and the Jacobi identity from Proposition \ref{wh}(5).
\end{proof}

The next result is probably also well-known to the experts, but nevertheless, we have decided to state:
\begin{proposition}\label{2times} {\em Let $n$ be odd and  $\alpha_1\in\pi_k(\S^n)$, $\alpha_2\in\pi_l(\S^n)$ and
$\alpha_3\in\pi_t(\S^n)$. Then:

\mbox{\em (1)} $2[\alpha_1,\alpha_2]=0$;

\mbox{\em (2)} $[[\alpha_1, \alpha_2], \alpha_3]=0.$}
\end{proposition}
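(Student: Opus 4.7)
My plan is to derive both statements directly from the formulas of Propositions \ref{baues} and \ref{p}, together with the facts about $[\iota_n,\iota_n]$ and $[[\iota_n,\iota_n],\iota_n]$ collected in Proposition \ref{wh}, exploiting the hypothesis that $n$ is odd.

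For part (2), I would apply Proposition \ref{baues}(2), which gives
$$[[\alpha_1,\alpha_2],\alpha_3]=[[\iota_n,\iota_n],\iota_n]\circ(\alpha_1\sharp\alpha_2\sharp\alpha_3).$$
Since $n$ is odd, Proposition \ref{wh}(2) tells us that $[[\iota_n,\iota_n],\iota_n]=0$, so the composite vanishes. This is the easy half.

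For part (1), I start from Proposition \ref{baues}(1). Since $n$ is odd, graded anticommutativity of the Whitehead product identifies the coefficient $[\iota_n,[\iota_n,\iota_n]]$ up to sign with $[[\iota_n,\iota_n],\iota_n]$, which is zero by Proposition \ref{wh}(2). Hence the formula collapses to
$$[\alpha_1,\alpha_2]=[\iota_n,\iota_n]\circ(\alpha_1\underline{\sharp}\,\alpha_2).$$
By Proposition \ref{wh}(1), $2[\iota_n,\iota_n]=0$ (both when $n\in\{1,3,7\}$, where the product is already zero, and otherwise, where it has order $2$). The remaining point is that doubling a composite $[\iota_n,\iota_n]\circ\gamma$ equals $(2[\iota_n,\iota_n])\circ\gamma$; this is where Proposition \ref{p} enters. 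Applying that proposition with $\alpha_2=\alpha_3=[\iota_n,\iota_n]$ yields
$$(2[\iota_n,\iota_n])\circ\gamma=2([\iota_n,\iota_n]\circ\gamma)+[[\iota_n,\iota_n],[\iota_n,\iota_n]]\circ h_2(\gamma),$$
and the correction term vanishes because $[[\iota_n,\iota_n],[\iota_n,\iota_n]]$ is an iterated Whitehead product of weight $4$, hence zero by Proposition \ref{three}(2) (or Proposition \ref{wh}(3)). Setting $\gamma=\alpha_1\underline{\sharp}\,\alpha_2$ gives $2[\alpha_1,\alpha_2]=0$.

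The only mildly subtle step is the last one — remembering that the naive identity $2(\xi\circ\gamma)=(2\xi)\circ\gamma$ is false in general for homotopy groups and must be justified via the distributivity formula of Proposition \ref{p}, with the error term killed by the weight-$4$ vanishing. Everything else reduces to plugging the odd-$n$ vanishings from Proposition \ref{wh} into the Baues formulas.
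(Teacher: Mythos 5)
Your proof is correct. For part (1) it follows the paper's first argument almost verbatim: reduce to $[\alpha_1,\alpha_2]=[\iota_n,\iota_n]\circ(\alpha_1\underline{\sharp}\,\alpha_2)$ via Proposition \ref{baues}(1) and the vanishing of $[[\iota_n,\iota_n],\iota_n]$ for odd $n$, then use the distributivity formula of Proposition \ref{p} to compare $2([\iota_n,\iota_n]\circ\gamma)$ with $(2[\iota_n,\iota_n])\circ\gamma=0$. The only divergence is cosmetic: you kill the error term $[[\iota_n,\iota_n],[\iota_n,\iota_n]]\circ h_2(\gamma)$ by observing it is a weight-$4$ iterated product in $\iota_n$ (Proposition \ref{wh}(3)), whereas the paper derives the same vanishing from the Jacobi identity and $[[\iota_n,\iota_n],\iota_n]=0$; both are valid, and your route is marginally shorter. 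You are also right to flag that $2(\xi\circ\gamma)=(2\xi)\circ\gamma$ is the genuinely delicate step --- that is exactly the point of invoking Proposition \ref{p}. (The paper also records a second proof of (1), pushing the factor $2$ through the bilinearity statement of Proposition \ref{wh}(4) via $2[\iota_n,\iota_n]=[2\iota_n,\iota_n]$, which avoids the composition issue entirely; you did not need it.)

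For part (2) you take a genuinely different and in fact more economical route: you apply Proposition \ref{baues}(2) directly, so that $[[\alpha_1,\alpha_2],\alpha_3]=[[\iota_n,\iota_n],\iota_n]\circ(\alpha_1\sharp\alpha_2\sharp\alpha_3)$ vanishes outright because the universal coefficient $[[\iota_n,\iota_n],\iota_n]$ is trivial for odd $n$ by Proposition \ref{wh}(2). The paper instead deduces $3[[\alpha_1,\alpha_2],\alpha_3]=0$ from Proposition \ref{three}(1) and $2[[\alpha_1,\alpha_2],\alpha_3]=0$ from part (1), and concludes by coprimality. Your argument is self-contained and does not depend on part (1) or on Proposition \ref{three}; the paper's version has the mild advantage of reusing already-established torsion statements, but yours gets the vanishing in one line from the same source (Proposition \ref{baues}(2)) that the paper itself uses elsewhere. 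No gaps.
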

\begin{proof} (1): Let $\alpha_i\in\pi_{m_i}(\mathbb{S}^n)$ for $i=1,2$.
By Proposition \ref{wh}(2), we have $[[\iota_n,\iota_n],\iota_n]=0$.
Hence, Proposition \ref{baues}(1) implies $[\alpha_1,\alpha_2]=[\iota_n,\iota_n]
\circ(\alpha_1\underline{\sharp}\alpha_2)$. On the other hand, Proposition \ref{wh}(1) gives $2[\iota_n,\iota_n]=0$. Then, we have
$(2[\iota_n,\iota_n])\circ(\alpha_1\underline{\sharp}\alpha_2)=0$.

It follows from Proposition \ref{baues}(1) that
$$0=(2[\iota_n,\iota_n])\circ(\alpha_1\underline{\sharp}\alpha_2)=2(([\iota_n,\iota_n])\circ(\alpha_1\underline{\sharp}\alpha_2))+[[\iota_n,\iota_n],[\iota_n,\iota_n]]\circ
h_2(\alpha_1\underline{\sharp}\alpha_2).$$
But, the Jacobi identity of Proposition \ref{wh}(5) together with the fact that $[[\iota_n,\iota_n],\iota_n]=0$
yield $[[\iota_n,\iota_n],[\iota_n,\iota_n]]=0$.
Consequently, $2[\alpha_1,\alpha_2]=0$.

We also present another proof of assertion (1) as follows.
In view of Proposition \ref{wh}(1) we have $2[\iota_n,\iota_n]=[2\iota_n,\iota_n]=0$.
Then, Proposition \ref{wh}(4) leads to $[2\iota_n,\alpha_2]=[\iota_n,2\alpha_2]=0$.
Again, by Proposition \ref{wh}(4), we get $[\alpha_1,2\alpha_2]=2[\alpha_1,\alpha_2]=0$
and the assertion follows.

(2): If $\alpha_i\in\pi_{m_i}(\mathbb{S}^n)$ for $i=1,2,3$ then, it follows from
Proposition \ref{three}(1) that $3[[\alpha_1,\alpha_2],\alpha_3]=0$. Together with (1), we conclude that
$[[\alpha_1,\alpha_2],\alpha_3]=0$ and the proof is complete.
\end{proof}

\subsection{Group structure on $[\Omega(\S^{r+1}), \Omega (Y)]$}
By the group isomorphism $[\Sigma J(\S^r),Y] \cong [\Omega(\S^{r+1}), \Omega (Y)]$, it follows from $\displaystyle{\Sigma J(\S^r) \simeq \bigvee_{i\ge 1} \S^{ir+1}}$
that $[\Omega(\S^{r+1}), \Omega (Y)]$ is in a one-to-one correspondence with the product $\displaystyle{\prod_{i\ge 1}\pi_{ir+1}(Y)}$ {\it as sets}.
Following \cite{ggw5}, we recall how the group multiplication $\circledast$ on $[\Sigma J(\S^r),Y]$ is defined.

Identifying $[\Omega(\S^{r+1}), \Omega (Y)]$ with $\displaystyle{\prod_{i\ge 1}\pi_{ir+1}(Y)}$ as sets, a typical element of $[\Omega(\S^{r+1}), \Omega (Y)]$ is an infinite tuple
${\bar{\alpha}}=(\alpha_1, \alpha_2,\dots)$, where $\alpha_i\in \pi_{ir+1}(Y)$ for $i\ge 1$. Denote by $(\bar{\alpha})_j$ the $j$-th coordinate of
$\bar{\alpha}\in [\Omega(\S^{r+1}), \Omega (Y)]$, i.e., $(\bar{\alpha})_j=\alpha_j \in \pi_{jr+1}(Y)$. Let $\bar{\alpha}=(\alpha_1, \alpha_2,\ldots), \, \bar{\beta}=(\beta_1, \beta_2,\ldots)$ be two elements in
$[\Omega(\S^{r+1}), \Omega (Y)]$. Then, the product $\bar{\alpha} \circledast \bar{\beta}$ is defined to be the element whose $j$-th coordinate is given by
\begin{equation}\label{multiplication}
(\bar{\alpha} \circledast \bar{\beta})_j=\alpha_j+\beta_j+\sum_{i+s =j}\Phi_{i,i+s-1} [\alpha_i, \beta_s],
\end{equation}
where the coefficient $\Phi$ is defined as follows. For any positive integers $l,k$ with $1\le l\le k$,
$$
\Phi_{l,k} \quad = \quad
\left\{
\aligned
& -\binom{\frac{k}{2}}{\frac{l}{2}} \qquad & \text{if $l$ is even and $k$ is even;} \\
& 0 & \text{if $l$ is odd and $k$ is even;} \\
& \binom{\frac{k-1}{2}}{\frac{l-1}{2}} & \text{if $l$ is odd and $k$ is odd;} \\
& -\binom{\frac{k-1}{2}}{\frac{l}{2}} & \text{if $l$ is even and $k$ is odd.}
\endaligned
\right.
$$

Using the multiplication \eqref{multiplication}, the coordinates of any $p$-torsion element of $[\Omega(\S^{r+1}), \Omega (Y)]$ are $p$-torsion elements of $\pi_{\ast}(Y)$ as we show in:

\begin{proposition} {\em \label{tor}
Let $\bar{\alpha}=(\alpha_1, \alpha_2,\ldots) \in [\Omega(\S^{r+1}), \Omega (Y)]$. If $\bar{\alpha}$ is a $p$-torsion element then each $\alpha_i$ is a $p$-torsion element of $\pi_{\ast}(Y)$.}
\end{proposition}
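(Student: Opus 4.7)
My plan is a strong induction on the coordinate index $j$. Let $\bar\alpha$ be a $p$-torsion element with $\bar\alpha^{\circledast p^t}=\bar 1$. The base case $j=1$ is immediate from \eqref{multiplication}: the summation over $i+s=1$ with $i,s\ge 1$ is empty, so the first-coordinate projection $\bar\alpha\mapsto\alpha_1$ is a group homomorphism onto the abelian group $\pi_{r+1}(Y)$, which gives directly $p^t\alpha_1=0$.

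\textbf{Key formula.} For the inductive step I would first establish, for every $j\ge 1$ and every $n\ge 0$, the presentation
\begin{equation*}
(\bar\alpha^{\circledast n})_j \;=\; n\alpha_j + W_{j,n},
\end{equation*}
where $W_{j,n}$ is an integer linear combination of iterated Whitehead products whose entries come entirely from $\alpha_1,\ldots,\alpha_{j-1}$. I would prove this via nested induction: an outer strong induction on $j$, and for each $j$ an inner induction on $n$ starting from the trivial case $n=0$. The step on $n$ is driven by the recurrence $\bar\alpha^{\circledast(n+1)}=\bar\alpha^{\circledast n}\circledast\bar\alpha$ combined with \eqref{multiplication}: the correction term is $\sum_{i+s=j}\Phi_{i,j-1}[(\bar\alpha^{\circledast n})_i,\alpha_s]$, and every pair of indices in the sum automatically satisfies $i,s\le j-1$. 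Substituting the outer hypothesis $(\bar\alpha^{\circledast n})_i=n\alpha_i+W_{i,n}$ and expanding by the bi-additivity of the Whitehead product (valid since each $\alpha_k$ lies in $\pi_{kr+1}(Y)$ with $kr+1\ge 2$) produces only iterated Whitehead products whose entries come from $\alpha_1,\ldots,\alpha_{j-1}$, closing both inductions.

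\textbf{Conclusion and main obstacle.} Granting the key formula, the proposition follows quickly. Assume by the outer induction that $\alpha_1,\ldots,\alpha_{j-1}$ are all $p$-torsion. By bi-additivity, any iterated Whitehead product built from $p$-torsion classes is itself $p$-torsion, so the finite sum $W_{j,p^t}$ is annihilated by some power $p^u$. Since $(\bar\alpha^{\circledast p^t})_j=0$, the formula gives $p^t\alpha_j=-W_{j,p^t}$, and then $p^{t+u}\alpha_j=0$, so $\alpha_j$ is $p$-torsion. The main obstacle is the bookkeeping in the inner induction: one must verify that feeding the iterated product $(\bar\alpha^{\circledast n})_i$ with $i<j$ into a Whitehead product never introduces dependence on $\alpha_k$ with $k\ge j$. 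This is precisely where the restriction $i+s=j$ in \eqref{multiplication}, combined with the fact that $W_{i,n}$ depends only on strictly lower-index coordinates, does the essential work. Propositions \ref{three} and \ref{wh}(4) would allow one to truncate the $W_{j,n}$ to weight at most three, but that simplification is not strictly necessary for this particular statement.
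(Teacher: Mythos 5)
Your proposal is correct and follows essentially the same route as the paper: an induction on the coordinate index, using the fact that the correction terms in the $j$-th coordinate of $\bar{\alpha}^{p^t}$ are integer combinations of Whitehead products whose entries are lower-index coordinates, hence $p$-torsion by the inductive hypothesis and bi-additivity. Your explicit formula $(\bar{\alpha}^{\circledast n})_j = n\alpha_j + W_{j,n}$ merely makes precise the bookkeeping that the paper's proof states informally.
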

\begin{proof}
Suppose $\bar{\alpha}^k=1$ for some $k=p^t$. It follows from the multiplication $\circledast$ that $(\bar{\alpha}^k)_1=k\alpha_1$.
This implies that $\alpha_1$ is a $p$-torsion element of $\pi_{r+1}(Y)$. We proceed by induction as follows. Suppose $\alpha_i$
is an element in $\pi_{\ast}(Y;p)$ for $i<n$. Again, using $\circledast$, $(\bar{\alpha}^{k})_n$ is the sum of $k\alpha_n$ and integer
multiples of elements of the form $[\alpha_i,\gamma]$, where $\gamma \in \pi_{jr+1}(Y)$ with $i+j=n$. By inductive hypothesis, $\alpha_i$
is an element in $\pi_{\ast}(Y;p)$ and so is $[\alpha_i, \gamma]$. Since $\bar{\alpha}^k=1$, we conclude that $\alpha_n$ must also be an element in $\pi_{nr+1}(Y;p)$
and the proof follows.
\end{proof}

Let $\bar{\alpha}, \bar{\beta}\in [\Omega(\S^{r+1}), \Omega(\S^N);p]$ and write $\bar{\gamma}=\bar{\alpha}\circledast \bar{\beta}\in [\Omega(\S^{r+1}), \Omega(\S^N)]$.
 By \eqref{multiplication}, we have $\gamma_j=\alpha_j+\beta_j+\sum_{i+s =j}\Phi_{i,i+s-1} [\alpha_i, \beta_s]$.
 Suppose $\bar{\alpha}^{p^{t_1}}=1=\bar{\beta}^{p^{t_2}}$. Then, one can easily show that  $\gamma^{p^{\max\{t_1,t_2\}}}=1$.
 Furthermore, $\bar{\alpha}^{p^{t_1}}=1$ implies $(\bar{\alpha}^{-1})^{p^{t_1}}=1$.

Given a prime $p$, spaces $X,Y$ and a co-$H$-structure on $X$, write $[X,Y;p]$ for the set of $p$-primary components
of $[X,Y]$. Then, using Proposition \ref{tor}, it is straightforward to show the following
\begin{proposition} {\em  \mbox{\em (1)} The bijection $[\Omega(\S^{r+1}),\Omega(\S^{N})]\stackrel{}{\longleftrightarrow}\prod_{i\ge 1}\pi_{n_i}(\S^{N})$
restricts to a bijection
$$[\Omega (\S^{r+1}),\Omega(\S^{N});p]\stackrel{}{\longleftrightarrow}\prod_{i\ge 1}\pi_{n_i}(\S^{N};p) \qquad \text{for $p\not=2$};$$

$\mbox{\em (2)}\;\;\;\;[\Omega(\S^{r+1}),\Omega(\S^N);p]$
is a subgroup of $[\Omega(\S^{r+1}),\Omega(\S^N)]$ for any prime $p\ge 2$.}
\end{proposition}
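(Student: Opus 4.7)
My approach is to derive both parts from Proposition~\ref{tor} together with the two observations recorded immediately before the proposition: that a $\circledast$-product of $p$-torsion elements is $p$-torsion with exponent bounded by the larger of the two exponents, and that $\circledast$-inverses preserve $p$-torsion. The forward containment of part (1) is immediate from Proposition~\ref{tor}, and part (2) is an immediate consequence of the two observations together with the fact that the identity is tautologically $p$-torsion. So the substantive task is the reverse containment in part (1).

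For this, suppose $p$ is odd and $\bar{\alpha}=(\alpha_1,\alpha_2,\dots)$ has each $\alpha_i\in\pi_{n_i}(\S^N;p)$. Iterating \eqref{multiplication} by induction on $j$, I claim that $(\bar{\alpha}^k)_j$ equals $k\alpha_j$ plus a sum of iterated Whitehead products in $\alpha_{i_1},\dots,\alpha_{i_m}$ with $i_1+\cdots+i_m=j$, whose integer coefficients are polynomial in $k$. By Proposition~\ref{three}(2) only weight-$2$ and weight-$3$ products can survive, so these polynomials have degree at most three. Proposition~\ref{wh}(4) together with $p^E\alpha_i=0$, where $p^E=\exp_p(\S^N)<\infty$ by Cohen--Moore--Neisendorfer and Neisendorfer for odd-dimensional spheres and by the odd-primary splitting of Theorem~\ref{raven} for even-dimensional spheres, shows that every such Whitehead product is $p$-torsion of exponent at most $p^E$.

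It remains to choose $t$ so that $k=p^t$ annihilates each coefficient polynomial modulo $p^E$. For $p$ odd, $v_p(\binom{p^t}{m})\ge t-1$ for $1\le m\le 3$, so $t\ge E+1$ kills all weight-$2$ contributions. The weight-$3$ contributions are handled case by case: Proposition~\ref{2times}(2) eliminates them when $N$ is odd; Proposition~\ref{three}(1) combined with $p$-torsion eliminates them when $N$ is even and $p\ne 3$; and the $3$-torsion of Proposition~\ref{three}(1) combined with $v_3(\binom{3^t}{3})\ge t-1$ handles the remaining case $p=3$, $N$ even. In every subcase $\bar{\alpha}^{p^{E+1}}=1$, giving the reverse containment.

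The principal obstacle is the inductive verification of the polynomial form of $(\bar{\alpha}^k)_j$ and the combinatorial book-keeping needed to confirm that its coefficient polynomials have sufficient $p$-adic valuation to annihilate the bounded-exponent Whitehead products. The hypothesis that $p$ is odd enters critically both through the valuation estimate for $\binom{p^t}{m}$ and through Propositions~\ref{2times} and~\ref{three}(1), whose $2$- and $3$-torsion conclusions combine with $p$-torsion only because $p\neq 2$ (and, in the even-$N$ subcase, mesh delicately with $p=3$ via the binomial divisibility).
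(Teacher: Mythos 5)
Your argument is correct and follows the route the paper intends: Proposition~\ref{tor} gives the forward containment, the closure observations recorded just before the statement give part (2), and your reverse containment is exactly the Type I/II/III coefficient analysis (binomial valuations combined with Propositions~\ref{2times} and~\ref{three}) that the paper itself carries out in the proof of Theorem~\ref{main}. Since the paper dismisses this proposition as ``straightforward'' from Proposition~\ref{tor}, your write-up simply supplies the details it omits, with no substantive deviation.
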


\section{Exponents of $[\Omega (\S^{r+1}), \Omega (\S^N)]$}
Before we state the main theorem of this section, first examine the coordinates of the powers $\bar{\gamma}^M$ for $\bar{\gamma}\in [\Omega(\S^{r+1}),\Omega (\S^N)]$.
Suppose $\alpha \in \pi_{n+1}(\S^N), \beta \in \pi_{m+1}(\S^N)$. Consider the element $(\alpha, \beta)\in [\Omega(\S^{r+1}),\Omega (\S^N)]$ as an infinite sequence with only two non-zero coordinates in positions $n$ and $m$.
Then,
$$
(\alpha, \beta)^2=(2\alpha, 2\beta, \Phi_{n,2n-1}[\alpha,\alpha], \Phi_{m,2m-1}[\beta, \beta], \Phi_{\Delta} [\alpha, \beta]).
$$
Here $\Phi_{\Delta}=\Phi_{n,n+m-1}+(-1)^{(n+1)(m+1)}\Phi_{m,n+m-1}$.

Now, we compute $(\alpha, \beta)^3$. The possible non-zero coordinates are divided into three types: (homogenous) elements, Whitehead products
and triple Whitehead products. This is the case because for spheres, all quadruple Whitehead products vanish by Proposition \ref{three}(2).

It is easy to see that the first type consists of $3\alpha$ and $3\beta$. Second type coordinates are: $3\Phi_{n,2n-1}[\alpha, \alpha]$, $3\Phi_{m,2m-1}[\beta,\beta]$, and $3\Phi_{\Delta}[\alpha,\beta]$. Finally, for the triple Whitehead products, we have:

$$
\Phi_{n,2n-1}\Phi_{2n,3n-1}[[\alpha, \alpha],\alpha], \Phi_{n,2n-1}\Phi_{2n,2n+m-1}[[\alpha,\alpha],\beta],
$$
$$
\Phi_{m,2m-1}\Phi_{2m,2m+n-1}[[\beta,\beta],\alpha], \Phi_{m,2m-1}\Phi_{2m,2m+n-1}[[\beta,\beta],\beta],
$$
$$
\Phi_{\Delta}\Phi_{n+m,2n+m-1}[[\alpha,\beta],\alpha], \Phi_{\Delta}\Phi_{n+m,2m+n-1}[[\alpha,\beta],\beta].
$$

To continue in this fashion, the power $(\alpha, \beta)^M$ has the following non-zero coordinates:

Type I: $M\alpha, M\beta$;

Type II: $(1+2+3+\ldots+ (M-1))\Phi_{n,2n-1}[\alpha,\alpha]=\binom{M}{2} \Phi_{n,2n-1}[\alpha,\alpha]$, $\binom{M}{2}\Phi_{m,2m-1}[\beta,\beta]$, $\binom{M}{2}\Phi_{\Delta}[\alpha,\beta]$;

Type III: each of the six triple Whitehead products will have the following as a factor in its coefficient:
$$
1+3 + (1+2+3) + (1+2+3+4) + \cdots +(1+2+\cdots+ M-1).
$$
This sum in turn is equal to
$$\sum_{l=2}^{M-1} \binom{l}{2}.$$
By \cite[p.\ 120]{ggw4}, we have
$$\sum_{l=2}^{M-1} \binom{l}{2}=\binom{M}{3}.$$

Therefore, for an arbitrary element $\bar{\alpha}=(\alpha_1, \alpha_2,\ldots) \in [\Omega(\S^{r+1}),\Omega (\S^N)]$, the non-zero coordinates of $\bar{\alpha}^M$ also fall into these three types as described above.
\par Now, the main result of this section is the following:
\begin{theorem}\label{main}
Let $p$ be a prime.
\begin{enumerate}
\item If $p\ne 2$ then $$\exp_p([\Omega (\S^{r+1}), \Omega (\S^N)])=\exp_p(\S^N).$$
\item If $p=2$ then
$$\exp_2([\Omega (\S^{r+1}), \Omega (\S^N)]) \left\{
                                                                             \aligned
                                                                             & =\exp_2(\S^N) & \text{if $N$ is odd;} \\
                                                                             & \le 2\exp_2(\S^N) & \text{if $N$ is even.}
                                                                             \endaligned
                                                                             \right.
                                                                             $$
\end{enumerate}
\end{theorem}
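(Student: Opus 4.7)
The plan separates into a straightforward lower bound and a more delicate upper bound. For the lower bound $\exp_p([\Omega(\S^{r+1}),\Omega(\S^N)])\ge \exp_p(\S^N)$, any $\alpha\in\pi_{ir+1}(\S^N)$ can be realized as a single-coordinate element $\bar\alpha$ with $\alpha$ in position $i$ and zero elsewhere; by \eqref{multiplication} one has $(\bar\alpha^M)_i=M\alpha$, so the order of $\bar\alpha$ in $[\Omega(\S^{r+1}),\Omega(\S^N)]$ is at least the order of $\alpha$, and this is already tight across all of $\pi_\ast(\S^N;p)$.

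For the upper bound I would take a $p$-torsion element $\bar\alpha=(\alpha_1,\alpha_2,\dots)$; by Proposition \ref{tor} every $\alpha_i$ lies in $\pi_{ir+1}(\S^N;p)$ with order dividing $p^t:=\exp_p(\S^N)$. The aim is to show $\bar\alpha^M=1$ for $M=p^t$ in part (1) and the odd-$N$ half of (2), and for $M=2^{t+1}$ in the even-$N$ half of (2), by verifying that the Type~I, Type~II and Type~III coordinates of $\bar\alpha^M$ identified just before the statement all vanish. Type~I ($M\alpha_i$) is killed by the choice of $M$. For Type~II ($\binom{M}{2}\Phi[\alpha_i,\alpha_j]$) the Whitehead product lies in $\pi_\ast(\S^N;p)$ with order dividing $p^t$, and the divisibilities $\binom{p^t}{2}=p^t(p^t-1)/2\equiv 0\pmod{p^t}$ (for $p$ odd) and $\binom{2^{t+1}}{2}=2^t(2^{t+1}-1)\equiv 0\pmod{2^t}$ (for the even-$N$, $p=2$ case) finish the job. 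For Type~III ($\binom{M}{3}\Phi\Phi[[\alpha_i,\alpha_j],\alpha_k]$) I would invoke Proposition \ref{2times}(2) when $N$ is odd (triple Whitehead products vanish outright), and Propositions \ref{baues}(2) together with \ref{wh}(2) when $N$ is even to see that each triple Whitehead product is $3$-torsion; this makes Type~III zero for $p\neq 3$ (simultaneously $p$- and $3$-torsion) and, for $p=3$, is killed whenever $3\mid\binom{3^t}{3}$, i.e.\ for $t\ge 2$.

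The main obstacle is the handful of edge cases in which these generic divisibilities fail, namely $p=2$ with $N$ odd and $t=1$, and $p=3$ with $N$ even and $t=1$. By Gray's lower bound $\exp_2(\S^{2n+1})\ge 2^n$ the first case reduces to $N=3$, where Proposition \ref{wh}(1) gives $[\iota_3,\iota_3]=0$; Proposition \ref{baues}(1) then forces every Whitehead product in $\pi_\ast(\S^3)$ to vanish, so Type~II vanishes. From $\exp_3(\S^{2n})=3^{2n-1}$ the second case reduces to $N=2$, and since $[[\iota_2,\iota_2],\iota_2]\in\pi_4(\S^2)\cong\Z/2$ has order dividing $\gcd(2,3)=1$ by Proposition \ref{wh}(2), Proposition \ref{baues}(2) kills every triple Whitehead product in $\pi_\ast(\S^2)$, so Type~III vanishes. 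Combining the three annihilations yields $\bar\alpha^M=1$ for the chosen $M$, matching the upper bounds stated in (1) and (2).
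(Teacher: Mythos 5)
Your proposal follows the paper's proof essentially verbatim in structure: the same Type I/II/III decomposition of the coordinates of $\bar\alpha^M$, the same binomial divisibilities, and the same edge cases ($N=3$ for $p=2$ and $N=2$ for $p=3$). Your explicit lower-bound argument via single-coordinate elements (which the paper leaves implicit) and your justification that all triple Whitehead products in $\pi_*(\S^2)$ vanish (which the paper merely asserts) are welcome additions.

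There is, however, one case your Type~II argument does not cover: $p=2$, $N$ odd, $t\ge 2$. Your ``generic'' divisibilities handle $p$ odd (where $p^t\mid\binom{p^t}{2}$) and the even-$N$, $p=2$ case (where you take $M=2^{t+1}$), and your edge-case paragraph handles $p=2$, $N$ odd, $t=1$ by reducing to $N=3$. But for $N$ odd, $N>3$, you take $M=2^t$ and $\binom{2^t}{2}=2^{t-1}(2^t-1)$ is divisible only by $2^{t-1}$, not by $2^t$, so ``order dividing $2^t$'' is not enough to kill $\binom{M}{2}\Phi[\alpha_i,\alpha_j]$. The fix is immediate and is exactly what the paper does: by Proposition \ref{2times}(1), Whitehead products on odd spheres satisfy $2[\alpha,\beta]=0$, and $2\mid\binom{2^t}{2}$ once $t\ge 2$. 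You already invoke Proposition \ref{2times}(2) for Type~III in this regime, so you only need to cite part (1) of the same proposition for Type~II. With that one line added, your argument is complete and coincides with the paper's.
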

\begin{proof}
Suppose $p^t=\exp_p(\S^N)$ and $\bar{\alpha}=(\alpha_1, \alpha_2,\ldots)$ is a $p$-torsion element in $[\Omega(\S^{r+1}),\Omega (\S^N)]$.

Case (1): $p$ is odd.

Let $M=p^t$. Since $M=\exp_p(\S^N)$, all coordinates of Type I must be zero. But $\binom{M}{2}=\frac{p^t(p^t-1)}{2}$ and $p$ is odd,
so it follows that $p^t \mid \binom{M}{2}$ and consequently $M=p^t$ divides all coefficients of coordinates of Type II in $\bar{\alpha}^M$. Thus, we conclude
that these coordinates must be zero. For Type III coordinates, we consider two subcases.

(i) Case $N$ is odd. By Proposition \ref{2times}, all triple Whitehead products vanish so all Type III coordinates must be zero.

(ii) Case $N$ is even. By Proposition \ref{three}, we have $3[\alpha, [\beta,\gamma]]=0$ for any $\alpha,\beta,\gamma\in \pi_{\ast}(\S^N)$. We conclude that if $p\ne 3$ then
$[\alpha, [\beta,\gamma]]=0$ so there are no non-zero coordinates of Type III in $\bar{\alpha}^M$.

If $p=3$, then $\binom{M}{3}=\frac{3^t(3^t-1)(3^t-2)}{6}$. Since $N$ is even, it follows from \cite{Toda} that $M=3^t$, where $t\ge 2$
provided $N>2$. It follows that $3 \mid \binom{M}{3}$. Again, by Proposition \ref{three}, there are no non-zero coordinates of Type III
in $\bar{\alpha}^M$.

When $N=2$, all triple Whitehead products in $\pi_{\ast}(\S^2)$ vanish so again there are no non-zero coordinates of Type III
in $\bar{\alpha}^M$. Hence, we conclude that for $p\ne 2$, $\exp_p([\Omega (\S^{r+1}), \Omega (\S^N)])=\exp_p(\S^N)$.

Case (2): $p=2$. We consider two subcases:

(i) Case $N$ is odd. For $N>3$, it follows from \cite{selick} that $\exp_2(\S^N)=2^t$ for some $t>1$. Since $N$ is odd, by
Proposition \ref{2times}, it follows that all triple Whitehead products must vanish. In other words, there are no non-zero coordinates
of Type III in $\bar{\alpha}^M$, where $M=2^t$. Since $\binom{M}{2}=2^{t-1} (2^t-1)$ and $t>1$, $2\mid \binom{M}{2}$ so that there are
no non-zero coordinates of Type II as $2[\alpha,\beta]=0$ by Proposition \ref{2times}. For $N=3$, there are no non-trivial Whitehead products
since $\S^3$ is a group. Clearly there are no non-zero coordinates of Type I in $\alpha^M$. Thus for $N$ odd, we have
$\exp_2([\Omega (\S^{r+1}), \Omega (\S^N)])=\exp_2(\S^N)$.

(ii) Case $N$ is even. Again for $N>2$, it follows from \cite{selick} and from the $EHP$ sequence that $\exp_2(\S^N)=2^t$ for some $t>1$. Now let $M=2^{t+1}$.
Then $\binom{M}{2}=2^t(2^{t+1}-1)$ so $\binom{M}{2}[\alpha,\beta]=0$ if $\alpha,\beta$ are elements in $\pi_{\ast}(\S^N;2)$. Thus, we conclude that there
are no non-zero coordinates of Type II. By Proposition \ref{three}, we have $3[\alpha, [\beta,\gamma]]=0$. Thus, if $\alpha,\beta$ and $\gamma$ are elements of $\pi_{\ast}(\S^N;2)$,
it follows that the triple Whitehead product $[\alpha, [\beta,\gamma]]=0$. We conclude that there are no non-zero coordinates of Type III.
Since $M=2 \exp_2(\S^N)$, there are no non-zero coordinates of Type I in $\alpha^M$.

Finally for $N=2$, the same arguments as above show that there are no non-zero coordinates of Types I and II. For Type III coordinates,
we note that all triple Whitehead products in $\pi_{\ast}(\S^2)$ vanish so that there are no non-zero coordinates of Type III.
Hence, for $N$ even, we have $\exp_2([\Omega (\S^{r+1}), \Omega (\S^N)])\le 2 \exp_2(\S^N)$ and the proof is complete.

\end{proof}

\begin{remark}\label{remark1}
When $p=2$ and $N$ is even, we can further analyze the $2$-primary exponent $\exp_2([\Omega (\S^{r+1}), \Omega (\S^N)])$ as follows.
Suppose $N>2$ is even and let $M=2^t=\exp_2(\S^N)$ with $t>1$. For Type II coordinates, we have $\binom{M}{2}=2^{t-1}(2^t-1)$;
$$\Phi_{n,2n-1}=\left\{
                                                                             \aligned
                                                                             & \binom{n-1}{\frac{n-1}{2}} & \text{if $n$ is odd;} \\
                                                                             & -\binom{n-1}{\frac{n}{2}} & \text{if $n$ is even.}
                                                                             \endaligned
                                                                             \right.
                                                                             $$

Now, recall the Lucas' formula:
$$\binom {m} {n}\equiv \prod \binom {m_i}{n_i}\,(\bmod\, p),$$
where $m=m_{k}p^{k}+m_{k-1}p^{k-1}+\cdots +m_{1}p+m_{0}$
and $n=n_{k}p^{k}+n_{k-1}p^{k-1}+\cdots +n_{1}p+n_{0} $
are the base $p$ expansions of $m$ and $n$ respectively. This uses the convention that $\binom{m}{n} = 0$
if $m < n$ for non-negative integers $m$ and $n$ and a prime $p$.

Then, we get $\binom{n-1}{\frac{n-1}{2}}\equiv 0\,(\bmod\,2)$ and  $\binom{n-1}{\frac{n}{2}}\equiv\begin{cases} 1\,(\bmod\,2)\; \text{if $n=2^k$},\\
0\;\text{otherwise}.
\end{cases}$

It follows that $\binom{M}{2}\Phi_{n,2n-1}[\alpha,\alpha]=0$ except possibly when $n=2^k$. In the case when $n=2^k$, we have
$[\alpha,\alpha]=(-1)^{(nr+1)(nr+1)}[\alpha,\alpha]=-[\alpha,\alpha]$. Thus if $\alpha$ is an element in
$\pi_{\ast}(\S^N;2)$, we have $\binom{M}{2}\Phi_{n,2n-1}[\alpha, \alpha]=0$ because
$2 \mid \binom{M}{2}$ since $t>1$. Similarly, $\binom{M}{2}\Phi_{m,2m-1}[\beta,\beta]=0$. Thus, the only possible non-zero coordinates
of Type II are $\binom{M}{2}\Phi_{\Delta}[\alpha,\beta]$.

Now,
$$
\Phi_{\Delta} \quad = \quad
\left\{
\aligned
& -\binom{\frac{n+m}{2}}{\frac{n}{2}}  & \text{if $m$ is even and $n$ is even;} \\
& 2\binom{\frac{n+m-2}{2}}{\frac{n-1}{2}}  & \text{if $m$ is odd and $n$ is odd;} \\
& -\binom{\frac{n+m-1}{2}}{\frac{n}{2}}  & \text{if $m$ is odd and $n$ is even;} \\
& -\binom{\frac{n+m-1}{2}}{\frac{m}{2}} & \text{if $m$ is even and $n$ is odd.}
\endaligned
\right.
$$
Therefore, if both $m$ and $n$ are odd, $\binom{M}{2}\Phi_{\Delta}[\alpha,\beta]=0$. This means that if $\bar{\alpha}=(\alpha_1,\alpha_2,\ldots)
\in [\Omega(\S^{r+1}),\Omega(\S^N)]$ ($N$ even) such that $\alpha_{2i}=0$ for all $i\ge 1$ then $\alpha^M=1$, where $M=\exp_2(\S^N)$.
\end{remark}
Based upon the proof of Theorem \ref{main} and Remark \ref{remark1}, we pose the following:
\begin{question}
Let $\alpha\in\pi_r(\S^{2N};2)$, $\beta\in\pi_s(\S^{2N};2)$ with $\alpha\not=\beta$, $r$ or $s$ is odd.
Suppose that the orders $|\alpha|=|\beta|=2^t$, the $2$-exponent of the sphere $\S^{2N}$.
\par Is it true that the order $|[\alpha,\beta]|<2^t$ for any $\alpha\in\pi_r(\S^{2N};2)$, $\beta\in\pi_s(\S^{2N};2)$ as
above or there are $\alpha\in\pi_r(\S^{2N};2)$,  $\beta\in\pi_s(\S^{2N};2)$ such that
$|[\alpha,\beta]|=2^t$?
\end{question}

\section{{Exponents of  $[\Omega (\S^{r+1}), \Omega (\Sigma(n)/G)]$, $[\Omega (\S^{r+1}), \Omega (\mathbb{C}P^n)]$} and $[\Omega (\S^{r+1}), \Omega (\mathbb{H}P^n)]$}
In this section, we examine $\exp_p([\Omega(\S^{r+1}), \Omega(Y)])$ when $Y$ with the homotopy type is of $\Sigma(n)/G$ for a homotopy $n$-sphere $\Sigma(n)$ with a free action of a discrete group $G$, a
complex projective space $\mathbb{C}P^n$ for $n\ge 1$ or a quaternionic projective space $\mathbb{H}P^n$ for $1\le n\le \infty$. First, we show certain basic properties about exponents.

Note that $\Omega(Y_1\times Y_2)\simeq \Omega(Y_1)\times \Omega(Y_2)$ and the space $\Sigma\Omega(\S^{r+1})$ is $1$-connected for $r\ge 1$.
Then, the following result is straightforward.

\begin{proposition}\label{cover} {\em \mbox{\em (1)} For any prime $p$, we have
$$\exp_p([\Omega(\S^{r+1}),\Omega(Y_1\times Y_2)])=\max\{\exp_p([\Omega(\S^{r+1}),\Omega(Y_1)]), \exp_p([\Omega(\S^{r+1}),\Omega(Y_2)])\};$$

\mbox{\em (2)} A covering map $\tilde X\to X$ induces an isomorphism $$[\Omega(\S^{r+1}),\Omega(\tilde X)]\cong [\Omega(\S^{r+1}),\Omega(X)]$$
and
$$\exp_p([\Omega(\S^{r+1}),\Omega(\tilde X)]=\exp_p([\Omega(\S^{r+1}),\Omega(X)]$$
for any prime $p$.}
\end{proposition}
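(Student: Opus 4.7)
The plan is to handle the two parts separately, with both following fairly directly from standard machinery together with the connectivity observation preceding the statement.

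For part (1), I would start from the canonical equivalence $\Omega(Y_1\times Y_2)\simeq \Omega(Y_1)\times\Omega(Y_2)$. Since taking $[\Omega(\S^{r+1}),-]$ sends products to products (and this is compatible with the natural group structure on these hom-sets, because the group operation is induced by the co-H-structure on the source and not by any structure on the target), one obtains a group isomorphism
\[
[\Omega(\S^{r+1}),\Omega(Y_1\times Y_2)]\;\cong\;[\Omega(\S^{r+1}),\Omega(Y_1)]\times[\Omega(\S^{r+1}),\Omega(Y_2)].
\]
For a direct product of groups the $p$-primary exponent is the maximum of the $p$-primary exponents of the factors, since a tuple $(\alpha,\beta)$ has a $p$-power order equal to $\max(|\alpha|,|\beta|)$ whenever both orders are $p$-powers (or $1$). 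This gives the claimed formula.

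For part (2), I would pass through the suspension adjunction to rewrite the groups as $[\Sigma\Omega(\S^{r+1}),\tilde X]$ and $[\Sigma\Omega(\S^{r+1}),X]$. Since $\Omega(\S^{r+1})$ is path-connected (as $r\ge 1$), the space $\Sigma\Omega(\S^{r+1})$ is simply connected, as already noted in the paper. The covering space lifting criterion therefore guarantees that every pointed map $\Sigma\Omega(\S^{r+1})\to X$ has a unique pointed lift to $\tilde X$, and the same reasoning applied to the cylinder $\Sigma\Omega(\S^{r+1})\wedge I_+$ shows that pointed homotopies lift uniquely as well. Composition with $\tilde X\to X$ is therefore a bijection $[\Sigma\Omega(\S^{r+1}),\tilde X]\to[\Sigma\Omega(\S^{r+1}),X]$.

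It remains to check that this bijection is a group homomorphism. The group law on $[\Sigma A,Y]$ is defined via the pinch map $\Sigma A\to \Sigma A\vee\Sigma A$, so naturality in $Y$ is automatic: the map induced by $\tilde X\to X$ commutes with the co-H pinch, hence preserves sums. Consequently the bijection is a group isomorphism, which forces the $p$-primary exponents to coincide for every prime $p$. I do not anticipate a serious obstacle in either part; the only point requiring any care is confirming that the covering-space lifting bijection respects the pinch-map group structure, and that is immediate from the naturality of the suspension co-H-structure.
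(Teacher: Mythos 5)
Your proposal is correct and follows exactly the route the paper intends: the paper offers no written proof beyond the two preliminary observations that $\Omega(Y_1\times Y_2)\simeq\Omega(Y_1)\times\Omega(Y_2)$ and that $\Sigma\Omega(\S^{r+1})$ is $1$-connected, and your argument is precisely the fleshing-out of those two facts (product splitting plus the exponent of a direct product for part (1), the suspension adjunction plus unique lifting through a covering for a simply connected CW source, compatible with the pinch-map group structure, for part (2)).
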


Recall that a finite dimensional $CW$-complex $\Sigma(n)$ with the homotopy type of the $n$-th sphere $\S^n$ is called a {\em homotopy $n$-sphere}.
If a discrete group $G$ acts freely and properly discontinuously on $\Sigma(n)$ then the quotient map $\Sigma(n)\to \Sigma(n)/G$ is a covering map.
Let $\mathbb{R}P^n$ be the $n$-th  real projective space for $n\ge 1$ and $\gamma_{n,\mathbb{R}} : \S^n\to \mathbb{R}P^n$ be the quotient map.
Then, Proposition \ref{cover}(2) yields:
\begin{theorem} The quotient map $\Sigma(n)\to \Sigma(n)/G$ induces an isomorphism $$[\Omega(\S^{r+1}),\Omega(\Sigma(n))]\stackrel{\cong}{\longrightarrow}[\Omega(\S^{r+1}),\Omega(\Sigma(n)/G)]$$
for $r\ge 1$.\label{form}
In particular, the quotient map $\gamma_{n,\mathbb{R}} : \S^n\to \mathbb{R}P^n$ induces an isomorphism $$[\Omega(\S^{r+1}),\Omega(\S^n)]\stackrel{\cong}{\longrightarrow}[\Omega(\S^{r+1}),\Omega(\mathbb{R}P^n)]$$
for $r\ge 1$. Consequently, $$\exp_p[\Omega(\S^{r+1}),\Omega(\Sigma(n)/G)]=\exp_p[\Omega(\S^{r+1}),\Omega(\Sigma(n))]$$ and $$\exp_p[\Omega(\S^{r+1}),\Omega(\mathbb{R}P^n)]=\exp_p[\Omega(\S^{r+1}),\Omega(\S^n)]$$ for $n,r\ge 1$ and for
any prime $p$.
\end{theorem}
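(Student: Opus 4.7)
The plan is to reduce the entire statement to a direct application of Proposition \ref{cover}(2), which has already done the substantive work. First, since the discrete group $G$ acts freely and properly discontinuously on the homotopy sphere $\Sigma(n)$, the quotient map $q\colon \Sigma(n)\to \Sigma(n)/G$ is a regular covering projection: every point of $\Sigma(n)$ admits a neighbourhood disjoint from each of its non-trivial $G$-translates, which projects to an evenly covered open set in the quotient. Invoking Proposition \ref{cover}(2) with $\tilde X=\Sigma(n)$ and $X=\Sigma(n)/G$ then gives simultaneously the isomorphism $[\Omega(\S^{r+1}),\Omega(\Sigma(n))]\cong [\Omega(\S^{r+1}),\Omega(\Sigma(n)/G)]$ and the equality of $p$-primary exponents for every prime $p$.

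For the \emph{in particular} clause, I would simply specialize to $G=\Z/2$ acting antipodally on the round sphere $\Sigma(n)=\S^n$; then $\Sigma(n)/G=\mathbb{R}P^n$ and $q$ is exactly $\gamma_{n,\mathbb{R}}$, so the same Proposition \ref{cover}(2) delivers both the isomorphism $[\Omega(\S^{r+1}),\Omega(\S^n)]\cong[\Omega(\S^{r+1}),\Omega(\mathbb{R}P^n)]$ and the equality $\exp_p[\Omega(\S^{r+1}),\Omega(\mathbb{R}P^n)]=\exp_p[\Omega(\S^{r+1}),\Omega(\S^n)]$. The two exponent identities in the conclusion are then immediate, since isomorphic groups share the same $p$-primary exponent by definition.

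Because the proof compresses to a one-line invocation of Proposition \ref{cover}(2), there is no genuine obstacle at this stage; any difficulty was absorbed by that proposition. For the record, the mechanism behind Proposition \ref{cover}(2) is that $\Omega(\S^{r+1})$ is path-connected (as the loop space of the simply connected space $\S^{r+1}$), so every continuous map $\Omega(\S^{r+1})\to \Omega(X)$ factors through the path component of the constant loop, and unique path lifting identifies that component with all of $\Omega(\tilde X)$. The bijection at the level of homotopy classes then respects the group operation, because the multiplication on $[\Sigma J(\S^r),-]$ comes from the co-$H$ structure on the domain and is preserved by any map on the target.
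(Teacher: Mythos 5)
Your proof is correct and follows essentially the same route as the paper: the paper likewise observes that a free, properly discontinuous action of a discrete group makes $\Sigma(n)\to\Sigma(n)/G$ a covering map and then simply invokes Proposition \ref{cover}(2), specializing to the antipodal $\Z/2$-action for the $\mathbb{R}P^n$ case. (The paper's stated reason behind Proposition \ref{cover}(2) is the $1$-connectivity of $\Sigma\Omega(\S^{r+1})$ via the adjunction $[\Omega(\S^{r+1}),\Omega(Y)]\cong[\Sigma\Omega(\S^{r+1}),Y]$, rather than your path-lifting argument on loop spaces, but this is an inessential difference.)
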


To state the next result, we recall that given a topological group $G$, by the Milnor's construction, there is a sequence $G\to E_1\to\cdots\to E_n\to\cdots$, where $E_n=G\ast\cdots\ast G$, the join of $(n+1)$-copies of $G$ and $EG=\mbox{colim}_n\,E_n$. Then, we have the universal $G$-fibre bundle $$G\hookrightarrow EG\stackrel{\pi}{\longrightarrow} BG=EG/G.$$
\par Next, consider the pointed suspension $\Sigma G=G\times[-1,1]/\sim$ and write $C_+=G\times[0,1]/\sim\,\subseteq \Sigma G$, and $C_-=G\times[-1,0]/\sim\,\subseteq \Sigma G$ for the upper and lower cones, respectively. Let $E=C_+\times G\sqcup C_-\times G/\sim$, where $((g,0),g')\sim((g,0),gg')$ for $g,g'\in G$.
Then, we get the principal $G$-bundle $G\to E\to\Sigma G$ and let $f: \Sigma G\to BG$ be the corresponding classifying map, and
$h : G\to \Omega(BG)$ its adjoint map. Notice that the map $h : G\to \Omega(BG)$ coincides with the composition $G\stackrel{\eta}{\to}\Omega\Sigma G\stackrel{\Omega f}{\to}\Omega (BG)$, where $\eta : G\to \Omega\Sigma G$ is determined by the unit map, i.e., the adjoint of the identity map $\mbox{id}_{\Sigma G}:\Sigma G \to \Sigma G$.

A map $\varphi : (X,\mu)\to (Y,\nu)$ of $H$-spaces is called an {\em $H$-map} provided  the diagram
$$\xymatrix{X\times X\ar[dd]_\mu\ar[rr]^{\varphi\times \varphi}&&Y\times Y\ar[dd]^{\nu}\\
\\
X\ar[rr]_\varphi&&Y}$$
is homotopy commutative.

\par If the group $G$ is a $CW$-complex (e.g., if $G$ is a Lie or discrete group) then the homotopy equivalence $X\ast Y\simeq \Sigma(X\wedge Y)$ imposes a $CW$-structure on $E_n$ and hence, on $EG$ as well. The fact that $G$ is a $CW$-complex implies that $G$ acts cellularly on $EG$. Consequently, $BG=EG/G$ is also a $CW$-complex. By the well-known Milnor's result \cite{milnor}, the space $\Omega(BG)$ is a $CW$-complex as well.

\par Let $\partial : \Omega(BG)\to G$ be the connecting map in the Barratt-Puppe sequence
$$\cdots\to \Omega(G)\to \Omega(EG)\to \Omega(BG)\stackrel{\partial}{\to} G \to EG\to BG$$
associated to the universal $G$-bundle $G\hookrightarrow EG\stackrel{\pi}{\to} BG$.

\begin{proposition}\label{H-equiv}
{\em Suppose the topological group $G$ is a $CW$-complex. Then the connecting map $\partial : \Omega(BG)\to G$ is a homotopy equivalence and is an $H$-map.}
\end{proposition}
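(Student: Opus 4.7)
My plan has two essentially independent parts: first, to establish that $\partial$ is a homotopy equivalence, and second, to establish that it respects the $H$-space structures.

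For the first part, I would invoke the long exact homotopy sequence of the principal bundle $G\hookrightarrow EG\stackrel{\pi}{\to} BG$. Since $EG$ is contractible, the connecting homomorphism
$$\partial_{*}:\pi_{n}(\Omega(BG))\longrightarrow\pi_{n}(G)$$
is an isomorphism for every $n\ge 0$, so $\partial$ is a weak homotopy equivalence. As $G$ is assumed to be a $CW$-complex and the excerpt records that Milnor's result forces $\Omega(BG)$ to be a $CW$-complex as well, Whitehead's theorem promotes $\partial$ to a genuine homotopy equivalence.

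For the second part, I would use the explicit description of $\partial$ via path lifting. Given a loop $\omega:I\to BG$ based at the basepoint, the homotopy lifting property provides a unique lift $\tilde\omega:I\to EG$ starting at the chosen basepoint $e_{0}$, and, after identifying the fiber $\pi^{-1}(\mbox{pt})$ with $G$ via the principal right $G$-action on $e_{0}$, we have $\partial(\omega)=\tilde\omega(1)$. For two loops $\omega_{1},\omega_{2}\in\Omega(BG)$ with lifts $\tilde\omega_{1},\tilde\omega_{2}$ and endpoints $g_{1}=\partial(\omega_{1})$, $g_{2}=\partial(\omega_{2})$, the concatenation $\omega_{1}\ast\omega_{2}$ is lifted by following $\tilde\omega_{1}$ and then the $G$-translate $g_{1}\cdot\tilde\omega_{2}$, which begins exactly where $\tilde\omega_{1}$ ended; its endpoint is $g_{1}\cdot g_{2}\cdot e_{0}$. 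This shows
$$\partial(\omega_{1}\ast\omega_{2})=g_{1}g_{2}=\partial(\omega_{1})\cdot\partial(\omega_{2}),$$
so $\partial$ converts loop concatenation into the multiplication of $G$, up to the reparametrization that makes concatenation strictly associative.

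A secondary route I would keep in reserve is to show that the adjoint classifying map $h:G\to\Omega(BG)$ recalled in the excerpt is itself an $H$-map and a weak equivalence, and then observe that $\partial$, being a homotopy inverse to $h$, automatically inherits the $H$-map property. The main obstacle I anticipate is not the weak equivalence half, which is routine, but making the $H$-map statement fully rigorous: concatenation of paths in $EG$ is associative only up to reparametrization, and one must check both that the path constructed by using the $G$-action to glue the two lifts is genuinely a lift of $\omega_{1}\ast\omega_{2}$ and that the resulting endpoint calculation is compatible with the particular $H$-structure chosen on $\Omega(BG)$ in the statement of the proposition.
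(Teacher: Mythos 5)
Your proposal is correct in substance, but your primary route differs from the paper's. For the equivalence half, the paper does not use the long exact sequence at all: it quotes Stasheff's theorem that the adjoint $h : G\to \Omega(BG)$ is an $H$-map and a weak homotopy equivalence, upgrades $h$ to a homotopy equivalence via the $CW$ hypotheses (on $G$ by assumption, on $\Omega(BG)$ by Milnor), identifies $\partial$ through Hatcher's homotopy-fibre description, and checks $\partial\circ h=\mathrm{id}_G$; then $\partial$, being a homotopy inverse of the $H$-equivalence $h$, is automatically both a homotopy equivalence and an $H$-map. That is exactly your ``secondary route kept in reserve,'' and it is the cleaner one: it disposes of both claims at once and sidesteps the point-set issues in your path-lifting argument. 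Your primary route is the standard holonomy computation and can be made to work, but note three wrinkles you would have to address. First, the homotopy lifting property does not give a \emph{unique} lift of a loop; you need either a continuous lifting function to define $\partial$ as an actual map, or the standard argument that the endpoint is well defined up to a path in the fibre, and in either case you must check that the resulting map agrees up to homotopy with the Barratt--Puppe connecting map the proposition is actually about. Second, the Milnor bundle carries a right $G$-action, and with that convention the glued lift of $\omega_1\ast\omega_2$ ends at $e_0\cdot g_2g_1$, so a careless choice of conventions produces an anti-$H$-map rather than an $H$-map; your formula $\partial(\omega_1\ast\omega_2)=g_1g_2$ silently assumes a left action. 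Third, your appeal to the long exact sequence requires knowing that $EG\to BG$ is a (quasi)fibration, which is the Dold--Lashof input the paper acknowledges. None of these is fatal, but each is a place where the paper's argument via $h$ avoids work that yours must do explicitly.
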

\begin{proof} By \cite[Theorem 8.6]{stasheff}, the map $h : G\to \Omega(BG)$  is an $H$-map and is a weak homotopy equivalence. Since $G$ is
a $CW$-complex, it follows that $h$ is a homotopy equivalence. Following \cite[p.\ 409]{hatcher}, the connecting map $\partial : \Omega(BG)\to G$ is given by
$\partial=\rho^{-1}\circ j\circ \varrho$ as in the following commutative diagram
$$\xymatrix{\Omega(BG)\ar[dd]_{\varrho}\ar[rr]^{\partial}&&G\ar[dd]^{\rho}\\
\\
F_i\ar[rr]_{j}&&F_\pi,}$$
where $F_\pi$ is the homotopy fiber of the map $\pi: EG \to BG$ and $F_i$ is the homotopy fiber of the inclusion map $i: F_\pi\hookrightarrow EG$,
both maps $\Phi$ and $\rho$ are the obvious homotopy equivalences. It is easy to verify that the composition $\partial \circ h=\mbox{id}_G$,
the identity map on $G$. Since the map $h$ is an $H$-map and a homotopy equivalence, so is $\partial$ and the proof is complete.
\end{proof}

\begin{corollary}\label{adold1}
{\em Let $G$ be a compact Lie group and $G\to X\to X/G$ be a principal $G$-fibration where $X/G$ is paracompact. Then the connecting map $\partial_X:  \Omega(X/G)\to G$
in the associated Barratt-Puppe sequence $$\cdots\to \Omega(G)\to \Omega(X)\to \Omega(X/G)\stackrel{\partial_X}{\to} G \to X\to X/G$$
is an $H$-map.}
\end{corollary}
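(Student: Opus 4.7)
The plan is to reduce the corollary to Proposition \ref{H-equiv} via naturality of the Barratt--Puppe sequence. First, since $G$ is a compact Lie group it has the homotopy type of a CW-complex, and since $X/G$ is paracompact the principal $G$-bundle $G\to X\to X/G$ is classified by a map $f\colon X/G\to BG$ together with a bundle morphism
$$\xymatrix{G\ar@{=}[d]\ar[r] & X\ar[d]\ar[r] & X/G\ar[d]^{f}\\ G\ar[r] & EG\ar[r]^{\pi} & BG}$$
making the square on the right a homotopy pullback.

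The naturality of the Barratt--Puppe sequence applied to this morphism of fibrations yields a ladder
$$\xymatrix{\cdots\ar[r] & \Omega(X)\ar[d]\ar[r] & \Omega(X/G)\ar[d]^{\Omega f}\ar[r]^-{\partial_X} & G\ar@{=}[d]\ar[r] & X\ar[d]\\
\cdots\ar[r] & \Omega(EG)\ar[r] & \Omega(BG)\ar[r]_-{\partial} & G\ar[r] & EG}$$
in which every square commutes up to homotopy. In particular, $\partial_X\simeq \partial\circ\Omega f$.

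Now $\Omega f\colon \Omega(X/G)\to \Omega(BG)$ is an $H$-map with respect to the standard loop multiplications, because loop concatenation is natural in the base: this is completely formal and needs no hypothesis on $f$. By Proposition \ref{H-equiv}, $\partial\colon \Omega(BG)\to G$ is an $H$-map (this is where we use that $G$ has CW-homotopy type, which holds since $G$ is a compact Lie group). The composition of two $H$-maps is an $H$-map, so $\partial_X\simeq \partial\circ\Omega f$ is an $H$-map, finishing the proof.

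The only point demanding care is the naturality square containing $\partial_X$ and $\partial$: one must check that the connecting maps in the two Barratt--Puppe sequences are compatible, i.e.\ that the bundle morphism above descends in a coherent way to the homotopy fibres defining $\partial_X$ and $\partial$ in the style of the diagram in Proposition \ref{H-equiv}. This is standard but is the main technical step; once it is in place the $H$-map conclusion is immediate from composing known $H$-maps.
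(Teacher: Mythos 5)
Your argument is correct and is essentially the paper's own proof: both classify the bundle by a map $\varphi\colon X/G\to BG$ (using paracompactness), use naturality of the Barratt--Puppe sequence to obtain $\partial_X=\partial\circ\Omega\varphi$, and conclude by composing the $H$-maps $\Omega\varphi$ and $\partial$, the latter via Proposition \ref{H-equiv}. No substantive difference.
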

\begin{proof}
Since the principal $G$-fibration $X\to X/G$ is classified by the classifying map $\varphi: X/G\to BG$ which can be lifted to a $G$-map $\tilde \varphi:X\to EG$, we have the following commutative diagram
\begin{equation*}
\begin{CD}
{...} @>>> \Omega(X)   @>>>  \Omega(X/G)     @>{\partial_X}>>   G    @>>> X  @>>> X/G \\
@.                     @V{\Omega\tilde\varphi}VV                 @V{\Omega \varphi}VV                   @|                       @V{\tilde \varphi}VV                     @V{\varphi}VV \\
{...} @>>> \Omega(EG)   @>>>  \Omega(BG)     @>{\partial}>>   G    @>>> EG  @>>> BG.
\end{CD}
\end{equation*}
It follows that $\partial_X=\partial\circ \Omega \varphi$, where $\partial$ is as in Proposition \ref{H-equiv}.
Since $\Omega \varphi$ and $\partial$ are $H$-maps, by Proposition \ref{H-equiv}, it follows that $\partial_X$ is an $H$-map as well.
\end{proof}

Given a group $G$ and a prime $p$, denote by $\exp_p(G)$ the least positive integer $p^t$ such that $\alpha^{p^t}=1$ for any $p$-torsion elements $\alpha$ in $G$. If such an integer does not exist or if $G$ has no $p$-torsion elements then we set $\exp_p(G)=1$.
The following result is straightforward.
\begin{proposition}\label{exact-exp}
{\em Let $1\to G' \to G \to G'' \to 1$ be a short exact sequence of groups. Then for any prime $p$,
$$
\exp_p(G')\le\exp_p(G)\le \exp_p(G')\cdot \exp_p(G'').
$$}
\end{proposition}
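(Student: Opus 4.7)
The plan is to treat the two inequalities separately, after identifying $G'$ with the kernel of the surjection $\pi \colon G \to G''$, which is allowed since the sequence is exact.

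For the lower bound $\exp_p(G') \le \exp_p(G)$, I would observe that the inclusion $G' \hookrightarrow G$ carries every $p$-torsion element of $G'$ to a $p$-torsion element of $G$, and relations in $G$ restrict to relations in $G'$. So if $\exp_p(G) = p^s$ is finite, then $\alpha^{p^s} = 1$ holds in $G$ for every $p$-torsion $\alpha \in G'$, and hence also in $G'$, giving $\exp_p(G') \le p^s$.

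For the upper bound $\exp_p(G) \le \exp_p(G') \cdot \exp_p(G'')$, set $p^a = \exp_p(G')$ and $p^b = \exp_p(G'')$, and take a $p$-torsion element $\alpha \in G$ with image $\bar\alpha = \pi(\alpha) \in G''$. Since $\pi$ is a homomorphism, $\bar\alpha$ is itself a $p$-torsion element of $G''$, so $\bar\alpha^{p^b} = 1$; equivalently $\alpha^{p^b} \in \ker\pi = G'$. Moreover, $\alpha^{p^b}$ remains a $p$-torsion element (as a power of a $p$-torsion element of $G$), and being now in $G'$ it satisfies $(\alpha^{p^b})^{p^a} = 1$. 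Therefore $\alpha^{p^{a+b}} = 1$, which yields $\exp_p(G) \le p^{a+b}$.

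There is no serious obstacle here; the only point requiring care is the convention that $\exp_p(H) = 1$ serves simultaneously as the exponent in the absence of $p$-torsion and as a placeholder when no uniform $p$-power bound exists. When any of the three groups behaves degenerately one needs a brief case check: for example, if $G$ is $p$-torsion-free then so is $G'$, making the lower bound trivial; if $\exp_p(G')$ or $\exp_p(G'')$ is infinite (formally recorded as $1$ by convention) the upper bound either becomes vacuous or must be read as asserting that a finite bound on one side forces compatible behavior on the other. This reduces to routine bookkeeping rather than any real difficulty.
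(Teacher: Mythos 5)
Your argument is correct and is precisely the standard one the paper has in mind: the paper states this proposition without proof, declaring it straightforward. Both inequalities are handled exactly as you do --- restricting torsion relations to $G'=\ker\pi$ for the lower bound, and pushing $\alpha^{p^b}$ into $\ker\pi$ for the upper bound --- and your remark about the degenerate convention $\exp_p(-)=1$ is a fair caveat about the statement itself rather than a gap in the proof.
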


We now give the $p$-primary exponent of $[\Omega(\S^{r+1}),\Omega(\mathbb{C}P^n)]$.

\begin{theorem} The principal $\S^1$-bundle $\S^1\to \S^{2n+1}\stackrel{\gamma_{n,\mathbb{C}}}{\longrightarrow}\mathbb{C}P^n$ gives rise to a split short exact sequence
$$1\to [\Omega(\S^2),\Omega(\S^{2n+1})]\stackrel{(\gamma_{n,\mathbb{C}})_\ast}\longrightarrow [\Omega(\S^2),\Omega(\mathbb{C}P^n)]\to \mathbb{Z}\to 0$$
of groups and an isomorphism $$[\Omega(\S^{r+1}),\Omega(\mathbb{C}P^n)]\stackrel{\cong}{\longrightarrow}[\Omega(\S^{r+1}),\Omega(\S^{2n+1})]$$
for $r\ge 2$.

Consequently, $$\exp_p[\Omega(\S^{r+1}),\Omega(\mathbb{C}P^n)]=\exp_p[\Omega(\S^{r+1}),\Omega(\S^{2n+1})]$$ for $n,r\ge 1$ and any prime $p$.
\label{complex}
\end{theorem}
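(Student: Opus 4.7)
The plan is to apply the functor $[\Omega(\S^{r+1}),-]$ to the Barratt-Puppe sequence of the principal $\S^1$-bundle, namely
$$\cdots\to \Omega(\S^1)\to \Omega(\S^{2n+1})\xrightarrow{\Omega\gamma_{n,\mathbb{C}}} \Omega(\mathbb{C}P^n)\xrightarrow{\partial}\S^1\to \S^{2n+1}\xrightarrow{\gamma_{n,\mathbb{C}}}\mathbb{C}P^n,$$
and read off the desired short exact sequence. Every space displayed here is an $H$-space; the loop maps are automatically $H$-maps, and the connecting map $\partial$ is an $H$-map by Corollary \ref{adold1} applied to the free action of $G=\S^1$ on $\S^{2n+1}$. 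Consequently, $[\Omega(\S^{r+1}),-]$ carries this to an exact sequence of (abelian) groups rather than merely of pointed sets.

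The flanking terms collapse as follows. The space $\Omega(\S^1)$ is weakly equivalent to the discrete space $\mathbb{Z}$, so any pointed map from the connected space $\Omega(\S^{r+1})$ factors through the basepoint and $[\Omega(\S^{r+1}),\Omega(\S^1)]=0$. For $n\ge 1$ the map $\S^1\to \S^{2n+1}$ is null-homotopic, so the induced map on $[\Omega(\S^{r+1}),-]$ is trivial. The remaining piece of the sequence is therefore
$$0\to [\Omega(\S^{r+1}),\Omega(\S^{2n+1})]\xrightarrow{(\gamma_{n,\mathbb{C}})_*}[\Omega(\S^{r+1}),\Omega(\mathbb{C}P^n)]\xrightarrow{\partial_*}[\Omega(\S^{r+1}),\S^1]\to 0.$$
Since $\S^1=K(\mathbb{Z},1)$, the rightmost term equals $\mathrm{Hom}(\pi_1\Omega(\S^{r+1}),\mathbb{Z})$. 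For $r\ge 2$ the space $\Omega(\S^{r+1})$ is simply connected, so this group vanishes and the map $(\gamma_{n,\mathbb{C}})_*$ is an isomorphism. For $r=1$ one has $\pi_1\Omega(\S^2)\cong \pi_2(\S^2)=\mathbb{Z}$, giving the quotient $\mathbb{Z}$.

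To exhibit the splitting when $r=1$, I would use the standard cellular inclusion $i\colon \S^2=\mathbb{C}P^1\hookrightarrow \mathbb{C}P^n$. Define $s\colon\mathbb{Z}\to[\Omega(\S^2),\Omega(\mathbb{C}P^n)]$ by sending the generator $1$ to $\Omega i$ and extending as a homomorphism. The composite $\partial\circ\Omega i$ factors, via the canonical identification $\Omega(\mathbb{C}P^\infty)\simeq \S^1$, as $\Omega$ of the inclusion $\S^2\hookrightarrow \mathbb{C}P^\infty$, which is the generator of $\pi_2(\mathbb{C}P^\infty)$. The induced map on $\pi_1$ is then an isomorphism $\mathbb{Z}\to\mathbb{Z}$, so $\partial\circ\Omega i$ generates $[\Omega(\S^2),\S^1]$, and $\partial_*\circ s=\pm\mathrm{id}_\mathbb{Z}$; after changing sign if necessary, $s$ is the required splitting.

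Finally, the exponent equality follows from Proposition \ref{exact-exp}: for $r\ge 2$ directly from the isomorphism, and for $r=1$ because the quotient $\mathbb{Z}$ is $p$-torsion-free, so $\exp_p(\mathbb{Z})=1$ by the paper's convention and the two outer exponents must agree. The main delicate point is verifying that the long Barratt-Puppe sequence gives exactness of \emph{groups} (not just pointed sets); this is exactly where Corollary \ref{adold1} does the essential work. Pinning down an explicit splitting via the $\mathbb{C}P^1$-inclusion is the only other step requiring attention, and it is routine once the identification $\Omega(\mathbb{C}P^\infty)\simeq \S^1$ is in hand.
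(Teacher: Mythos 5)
Your proposal is correct and follows essentially the same route as the paper: apply $[\Omega(\S^{r+1}),-]$ to the Barratt--Puppe sequence of the Hopf bundle, use Corollary \ref{adold1} to make $\partial_*$ a homomorphism, kill the end terms via $[\Omega(\S^{r+1}),\Omega(\S^1)]=0$ and the null-homotopy of $\S^1\hookrightarrow\S^{2n+1}$, identify $[\Omega(\S^2),\S^1]\cong\mathbb{Z}$ and $[\Omega(\S^{r+1}),\S^1]=0$ for $r\ge 2$, and finish with Proposition \ref{exact-exp}. Your explicit splitting via $\mathbb{C}P^1\hookrightarrow\mathbb{C}P^n$ is a nice addition the paper leaves implicit (the quotient $\mathbb{Z}$ is free, so the sequence splits automatically); only your aside that ``every space displayed here is an $H$-space'' is inaccurate for $\S^{2n+1}$ and $\mathbb{C}P^n$, but it is not needed, since the group structure is only required on the terms up to $[\Omega(\S^{r+1}),\S^1]$.
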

\begin{proof} Consider the principal $\S^1$-bundle $\S^1\hookrightarrow \S^{2n+1}\stackrel{\gamma_{n,\mathbb{C}}}{\longrightarrow}\mathbb{C}P^n$ and
the associated Barratt-Puppe sequence
$$\cdots\to\S^1\to \Omega(\S^{2n+1})\to\Omega(\mathbb{C}P^n)\stackrel{\partial}{\to}\S^1\to\S^{2n+1}\to \mathbb{C}P^n.$$
Notice that $[\Omega(\S^2),\S^1]=H^1(\Omega(\S^2),\mathbb{Z})\cong\mathbb{Z}$ and $[\Omega(\S^{r+1}),\S^1]=0$ for $r\ge 2$.
Since the inclusion map $\S^1\hookrightarrow\S^{2n+1}$ is null-homotopic and,  by Corollary \ref{adold1}, the connecting map $\partial : \Omega(\mathbb{C}P^n)\to\S^1$ is an $H$-map we get a split short exact sequence of groups
$$1\to[\Omega(\S^2),\Omega(\S^{2n+1})]\stackrel{(\gamma_{n,\mathbb{C}})_\ast}{\longrightarrow}[\Omega(\S^2),\mathbb{C}P^n]\longrightarrow[\Omega(\S^2),\S^1]\cong\mathbb{Z}\to 0.$$
Furthermore, since $[\Omega (\S^{r+1}), \S^1]=0$ for $r\ge 2$, the Barratt-Puppe sequence splits and we obtain isomorphisms
$$[\Omega(\S^{r+1}),\Omega(\mathbb{C}P^n)]\stackrel{\cong}{\longrightarrow}[\Omega(\S^{r+1}),\Omega(\S^{2n+1})] \qquad \text{for $r\ge 2$}.$$

Now, Proposition \ref{exact-exp} yields $$\exp_p[\Omega(\S^{r+1}),\Omega(\mathbb{C}P^n)]=\exp_p[\Omega(\S^{r+1}),\Omega(\S^{2n+1})]$$ for $n,r\ge 1$ and any prime $p$,
and the proof is complete.
\end{proof}

Finally, we consider the quaternionic projective spaces $\mathbb{H}P^n$ with $n\ge 1$ and $\mathbb{H}P^\infty=\mbox{colim}_n\mathbb{H}P^n$.
Because the canonical inclusion map $i :\mathbb{H}P^1=\S^4\hookrightarrow \mathbb{H}P^\infty$ is the classifying map of the
Hopf fibration $\nu_4:\S^7\to \S^4$, there is a map of fibrations
$$\xymatrix{\S^3\ar[d]\ar@{=}[rr]&&\S^3\ar[d]\\
\S^7\ar[d]_{\nu_4}\ar[rr]&&E\S^3\ar[d]\\
\S^4\ar@{^{(}->}[rr]^i&&\mathbb{H}P^\infty.}$$
This implies that the induced map $i_\ast : \pi_k(\S^4)\to\pi_k(\mathbb{H}P^\infty)$ is surjective for $k\ge 1$. Since $\pi_k(\S^4)\cong\pi_k(\S^7)\oplus \Sigma\pi_{k-1}(\S^3)$,
we derive that the restriction ${i_\ast}_| : \Sigma\pi_{k-1}(\S^3)\to \pi_k(\mathbb{H}P^\infty)$ is an isomorphism and there is a splitting
short exact sequence $$0\to\pi_k(\S^7)\longrightarrow\pi_k(\S^4)\longrightarrow\pi_k(\mathbb{H}P^\infty)\to 0$$ for $k\ge 1$.
Now, given  $\alpha\in \pi_k(\mathbb{H}P^\infty)$, there is  $\alpha'\in \pi_{k-1}(\S^3)$ such that $\alpha=i_\ast \Sigma \alpha'$.
\begin{proposition} {\em \label{whp}
\mbox{\em (1)} If  $\alpha=i_\ast\Sigma \alpha'\in \pi_k(\mathbb{H}P^\infty)$ and  $\beta=i_\ast\Sigma \beta'\in \pi_l(\mathbb{H}P^\infty)$ then
$[\alpha,\beta]=i_\ast(\Sigma\nu'^+\circ \Sigma(\alpha'\wedge \beta'))$ and $12[\alpha,\beta]=0$.

\mbox{\em (2)} If $\alpha=i_\ast \Sigma \alpha'\in \pi_k(\mathbb{H}P^\infty)$, $\beta=i_\ast\Sigma \beta'\in \pi_l(\mathbb{H}P^\infty)$ and $\gamma=i_\ast\Sigma \gamma'\in \pi_m(\mathbb{H}P^\infty)$
then  $[[\alpha,\beta],\gamma]=\Sigma(\nu'^+)\Sigma^4(\nu'^+)\circ \Sigma(\alpha'\wedge \beta'\wedge \gamma')$ and $3[[\alpha,\beta],\gamma]=0$.

\mbox{\em (3)} If  $\alpha_i=i_\ast\Sigma \alpha'_i\in\pi_{k_i}(\mathbb{H}P^\infty)$ with $i=1,\ldots,m$ and $m\ge 4$ then all Whitehead products $[\alpha_1,\ldots,\alpha_m]=0$.}
\end{proposition}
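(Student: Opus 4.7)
The plan is to reduce everything to Whitehead product computations in $\pi_\ast(\S^4)$ by naturality and then to exploit the surjection $i_\ast\colon\pi_k(\S^4)\twoheadrightarrow\pi_k(\mathbb{H}P^\infty)$ established just before the statement. Since each $\alpha_j=i_\ast\Sigma\alpha'_j$, iterated application of the identity $i_\ast[\sigma_1,\sigma_2]=[i_\ast\sigma_1,i_\ast\sigma_2]$ turns any bracket $[\alpha_1,\ldots,\alpha_m]$ into $i_\ast[\Sigma\alpha'_1,\ldots,\Sigma\alpha'_m]$, so all three parts reduce to statements about iterated Whitehead products in $\pi_\ast(\S^4)$.

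For Part (1), I would apply Proposition \ref{baues}(1) with $n=4$. Because the James--Hopf invariant vanishes on suspensions, $h_2(\Sigma\alpha')=h_2(\Sigma\beta')=0$ and the last two terms of the formula disappear, leaving $[\Sigma\alpha',\Sigma\beta']=[\iota_4,\iota_4]\circ(\Sigma\alpha'\,\underline{\sharp}\,\Sigma\beta')$; a direct inspection of the exterior cup product identifies $\Sigma\alpha'\,\underline{\sharp}\,\Sigma\beta'=\Sigma(\alpha'\wedge\beta')$. The classical Toda identity $[\iota_4,\iota_4]=2\nu_4-\Sigma(\nu'^+)$ in $\pi_7(\S^4)$, combined with $i_\ast\nu_4=0$ (because $i\colon\S^4\to\mathbb{H}P^\infty$ is the classifying map of the Hopf bundle $\nu_4$), then gives $i_\ast[\iota_4,\iota_4]=-i_\ast\Sigma(\nu'^+)$ and yields the claimed formula. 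The bound $12[\alpha,\beta]=0$ follows from $\pi_7(\S^4)\cong\mathbb{Z}\{\nu_4\}\oplus\mathbb{Z}/12\{\Sigma(\nu'^+)\}$.

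For Part (2), I would iterate Part (1). Since $\Sigma$ commutes with composition, Part (1) rewrites as $[\alpha,\beta]=i_\ast\Sigma\delta'$ with $\delta'=\nu'^+\circ(\alpha'\wedge\beta')\in\pi_{k+l-2}(\S^3)$, so the same hypothesis applies to the pair $([\alpha,\beta],\gamma)$. The bifunctoriality identity $(\nu'^+\circ(\alpha'\wedge\beta'))\wedge\gamma'=(\nu'^+\wedge\mathrm{id}_{\S^3})\circ(\alpha'\wedge\beta'\wedge\gamma')$ together with $\nu'^+\wedge\mathrm{id}_{\S^3}=\Sigma^3(\nu'^+)$ then produces the stated composite $\Sigma(\nu'^+)\circ\Sigma^4(\nu'^+)\circ\Sigma(\alpha'\wedge\beta'\wedge\gamma')$ (understood in the image of $i_\ast$). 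The order assertion $3[[\alpha,\beta],\gamma]=0$ is cleaner to extract from Proposition \ref{baues}(2): one has $[[\Sigma\alpha',\Sigma\beta'],\Sigma\gamma']=[[\iota_4,\iota_4],\iota_4]\circ(\Sigma\alpha'\sharp\Sigma\beta'\sharp\Sigma\gamma')$, and by Proposition \ref{wh}(2) the triple bracket $[[\iota_4,\iota_4],\iota_4]$ has order $3$ since $n=4$ is even, so the conclusion follows by naturality.

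Part (3) is then immediate: by the reduction $[\alpha_1,\ldots,\alpha_m]=i_\ast[\Sigma\alpha'_1,\ldots,\Sigma\alpha'_m]$, and the inner bracket vanishes in $\pi_\ast(\S^4)$ for $m\ge 4$ by Proposition \ref{three}(2). I expect the main obstacle to be the bookkeeping of suspension shuffles and sign conventions in the exterior cup products of Parts (1) and (2); once one fixes the definition of $\nu'^+\in\pi_6(\S^3)$ so that the Toda identity $[\iota_4,\iota_4]=2\nu_4-\Sigma(\nu'^+)$ holds on the nose, every remaining manipulation is formal.
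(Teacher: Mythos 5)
Your argument is correct and, as far as the explicit formulas go, follows the same route as the paper: reduce to $\pi_*(\S^4)$ by naturality of Whitehead products under $i_*$, collapse $[\Sigma\alpha',\Sigma\beta']$ to $[\iota_4,\iota_4]\circ\Sigma(\alpha'\wedge\beta')$ (the paper quotes Proposition \ref{wh}(6) where you use Proposition \ref{baues}(1) together with the vanishing of $h_2$ on suspensions --- these are equivalent), and then kill the $2\nu_4$ term of $[\iota_4,\iota_4]=2\nu_4-\Sigma\nu'^+$ via $i_*\nu_4=0$; your iteration of part (1), using bifunctoriality of the smash to produce $\Sigma^4(\nu'^+)$, recovers exactly the paper's composite $\Sigma(\nu'^+)\Sigma^4(\nu'^+)\circ\Sigma(\alpha'\wedge\beta'\wedge\gamma')$. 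Where you genuinely diverge is in the torsion and vanishing claims of (2) and (3): you get $3[[\alpha,\beta],\gamma]=0$ and the vanishing of all brackets of weight $\ge 4$ by applying Proposition \ref{three} in $\pi_*(\S^4)$ and pushing forward along $i_*$, whereas the paper instead identifies $\Sigma(\nu'^+)\Sigma^4(\nu'^+)$ with $\alpha_1(4)\alpha_1(7)$ (using $\Sigma^2\nu'=2\nu_5$ and $\nu'\circ\nu_6=0$) and reduces the weight-$4$ bracket to a composite with $\alpha_1(7)\alpha_1(10)=0$. Your route is shorter and avoids those Toda-calculus identities; the paper's route buys the explicit identification of the triple bracket as $i_*$ of a composite with the order-$3$ element $\alpha_1(4)\alpha_1(7)$, which carries more information than the exponent bound alone. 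Two cosmetic points: your formulas carry a sign ($i_*[\iota_4,\iota_4]=-i_*\Sigma\nu'^+$) that the paper's statement suppresses, which is immaterial for the order claims; and in passing from $12\,\Sigma\nu'^+=0$ to $12[\alpha,\beta]=0$ one should note that right-composition with the suspension $\Sigma(\alpha'\wedge\beta')$ is additive (Proposition \ref{p}, since the relevant Hopf invariant vanishes), a point both you and the paper leave implicit.
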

\begin{proof}
(1): Given $\alpha\in \pi_k(\mathbb{H}P^\infty)$ and  $\beta\in \pi_l(\mathbb{H}P^\infty)$ there are $\alpha'\in \pi_{k-1}(\S^3)$ and  $\beta'\in \pi_{l-1}(\S^3)$
such that $\alpha=i_\ast \Sigma \alpha'$ and  $\beta=i_\ast \Sigma \beta'$. Hence, by means of Proposition \ref{wh}(6), we get $[\alpha,\beta]=[i_\ast \Sigma \alpha',i_\ast \Sigma \beta']=i_\ast([\iota_4,\iota_4]\circ \Sigma(\alpha'\wedge \beta'))$.
Since, in view of \cite[(1.20)]{GM1}, we have $[\iota_4,\iota_4]=2\nu_4-\Sigma\nu'^+$ for $\nu'^+=\nu'-\alpha_1(3)$, we conclude that
$$[\alpha,\beta]=i_\ast(\Sigma\nu'^+\circ \Sigma(\alpha'\wedge \beta')).$$
Since the order $|\Sigma\nu'^+|=12$, this implies that $12[\alpha,\beta]=0$.

(2): An element $\gamma\in\pi_m(\mathbb{H}P^\infty)$ leads to $\gamma'\in \pi_{m-1}(\S^3)$ with  $\gamma=i_\ast \Sigma \gamma'$.
Then, $[[\alpha,\beta],\gamma]=i_\ast[(E\nu'^+\circ \Sigma(\alpha'\wedge \beta'),\Sigma \gamma']=i_\ast[\Sigma\nu'^+,\iota_4]\circ \Sigma(\alpha'\wedge \beta'\wedge \gamma')=
\Sigma(\nu'^+)\Sigma^4(\nu'^+)\circ \Sigma(\alpha'\wedge \beta'\wedge \gamma')$.
\par But, $\nu'^+=\nu'-\alpha_1(3)$, and in view of \cite[(1.25) and (1.28)]{GM1}, we have $\Sigma^2\nu'=2\nu_5$, $\nu'\circ\nu_6=0$.
Then, we conclude that $$[[\alpha,\beta],\gamma]=i_\ast(\alpha_1(4)\alpha_1(7)\circ \Sigma(\alpha'\wedge \beta'\wedge \gamma')).$$
By \cite[(1.8)]{GM1},  the order $|\alpha_1(4)\alpha_1(7)|=3$, this implies that $3[[\alpha,\beta],\gamma]=0$.

(3): Finally, $\delta\in\pi_t(\mathbb{H}P^\infty)$ leads to $\delta'\in \pi_{t-1}(\S^3)$ with $\delta=i_\ast \Sigma \delta'$.
Then, $[[[\alpha,\beta],\gamma],\delta]=i_\ast([\alpha_1(4)\alpha_1(7)\circ \Sigma(\alpha'\wedge \beta'\wedge \gamma'),\Sigma \delta'])=
i_\ast([\alpha_1(4)\alpha_1(7),\iota_4]\circ \Sigma(\alpha'\wedge \beta'\wedge \gamma'\wedge \delta'))=
i_\ast([\iota_4,\iota_4]\Sigma^4(\alpha_1(3)\alpha_1(6))\circ \Sigma(\alpha'\wedge \beta'\wedge \gamma'\wedge \delta'))=i_\ast([\iota_4,\iota_4]\alpha_1(7)\alpha_1(10)\circ \Sigma(\alpha'\wedge \beta'\wedge \gamma'\wedge \delta'))$.
Since, in view of \cite[(1.8)]{GM1}, we have $\alpha_1(7)\alpha_1(10)=0$, we deduce that $$[[[\alpha,\beta],\gamma],\delta]=0.$$
From this we conclude that all Whitehead products $[\alpha_1,\ldots,\alpha_m]=0$ of weight $m\ge 4$ for $\alpha_i\in\pi_{k_i}(\mathbb{H}P^\infty)$ with $i=1,\ldots,m$
and the proof follows.
\end{proof}

Now, for $Y=\mathbb{H}P^\infty$ we have
\begin{theorem} If $r\ge 1$ then $$\exp_2[\Omega(\S^{r+1}),\Omega(\mathbb{H}P^\infty)]=\exp_2[\Omega(\S^{r+1}),\S^3]\le 2\exp_2(\S^4)$$ and
$$\exp_p[\Omega(\S^{r+1}),\Omega(\mathbb{H}P^\infty)]=\exp_p[\Omega(\S^{r+1}),\S^3]=\exp_p(\S^4)=p^3$$
for any odd prime $p$.\label{quater}
\end{theorem}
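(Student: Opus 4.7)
The plan is to reduce to the sphere case by combining the $H$-equivalence $\Omega(\mathbb{H}P^\infty) \simeq \S^3$ with Theorem \ref{main}. Since $\mathbb{H}P^\infty = B\S^3$ with $\S^3$ the compact Lie group of unit quaternions (in particular a $CW$-complex), Proposition \ref{H-equiv} furnishes an $H$-equivalence $\partial\colon \Omega(\mathbb{H}P^\infty) \to \S^3$. This at once yields a group isomorphism $[\Omega(\S^{r+1}),\Omega(\mathbb{H}P^\infty)] \cong [\Omega(\S^{r+1}),\S^3]$ and therefore the first equality of $p$-primary exponents in both statements of the theorem.

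Next, I would obtain the upper bounds via the natural inclusion $i\colon \S^4 = \mathbb{H}P^1 \hookrightarrow \mathbb{H}P^\infty$ classifying the Hopf bundle $\nu_4\colon \S^7 \to \S^4$. As recorded just before Proposition \ref{whp}, the induced map $i_*\colon \pi_k(\S^4) \twoheadrightarrow \pi_k(\mathbb{H}P^\infty)$ is surjective for every $k \ge 1$. Under the set-theoretic bijection $[\Omega(\S^{r+1}),\Omega(Y)] \leftrightarrow \prod_i \pi_{ir+1}(Y)$ and the naturality of the Whitehead products that enter the multiplication \eqref{multiplication}, the induced map $i_*\colon [\Omega(\S^{r+1}),\Omega(\S^4)] \to [\Omega(\S^{r+1}),\Omega(\mathbb{H}P^\infty)]$ is a surjective group homomorphism. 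Consequently $\exp_p[\Omega(\S^{r+1}),\Omega(\mathbb{H}P^\infty)]$ divides $\exp_p[\Omega(\S^{r+1}),\Omega(\S^4)]$, and Theorem \ref{main} bounds the latter by $2\exp_2(\S^4)$ for $p=2$ and by $\exp_p(\S^4)=p^3$ for odd $p$.

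A complementary direct computation, modelled on the proof of Theorem \ref{main}, proceeds as follows. By Proposition \ref{tor}, a $p$-torsion element $\bar\alpha = (\alpha_1,\alpha_2,\dots)$ has each coordinate $\alpha_i \in \pi_{ir+1}(\mathbb{H}P^\infty;p)$; under the $H$-equivalence this is $\pi_{ir}(\S^3;p)$, so $\exp_p(\alpha_i) \mid \exp_p(\S^3)$. Proposition \ref{whp} confines the Whitehead products in $\pi_*(\mathbb{H}P^\infty)$: double products have order dividing $12$, triple products have order dividing $3$, and all products of weight $\ge 4$ vanish. Taking $M=p^3$ for odd $p$, the Type I coordinates $M\alpha_i$ vanish because $p \mid p^3$; the Type II coordinates $\binom{M}{2}\Phi[\alpha,\beta]$ vanish because $v_p(\binom{p^3}{2})=3$ combined with $12[\alpha,\beta]=0$; and the Type III coordinates $\binom{M}{3}\Phi[[\alpha,\beta],\gamma]$ vanish because $v_p(\binom{p^3}{3}) \ge 2$ combined with $3[[\alpha,\beta],\gamma]=0$. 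The main obstacle is the lower bound required for the equality in the odd-prime case: exhibiting a Cohen group element of order exactly $p^3$ is delicate, since Proposition \ref{whp} forces the Whitehead products in $\pi_*(\mathbb{H}P^\infty)$ to contribute only bounded torsion, so the $p^3$-order would have to arise from subtle interactions in the multiplication \eqref{multiplication} rather than from any single homotopy-group coordinate.
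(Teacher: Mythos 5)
Your opening step and your ``complementary direct computation'' together reproduce the paper's actual argument: the paper likewise invokes Proposition \ref{H-equiv} to identify $[\Omega(\S^{r+1}),\Omega(\mathbb{H}P^\infty)]$ with $[\Omega(\S^{r+1}),\S^3]$ as groups, and then reruns the Type I/II/III analysis of Theorem \ref{main} using Proposition \ref{whp} (at $p=2$: $4[\alpha,\beta]=0$ and $2$-primary triple products vanish, giving the bound $2\exp_2(\S^4)$; at $p=3$: $3[\alpha,\beta]=0$ and $3[[\alpha,\beta],\gamma]=0$; at $p>3$: all products vanish $p$-locally). Your alternative route to the upper bound via the surjection $i_*\colon\pi_k(\S^4)\twoheadrightarrow\pi_k(\mathbb{H}P^\infty)$ is not in the paper and, as written, has a gap: a surjective homomorphism $G\to H$ does not force $\exp_p(H)$ to divide $\exp_p(G)$, since a $p$-torsion element of $H$ need not lift to a $p$-torsion element of $G$ (compare $\mathbb{Z}\to\mathbb{Z}/p$). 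To repair it you would lift coordinatewise, using Proposition \ref{tor} and the splitting $\pi_k(\S^4)\cong\pi_k(\S^7)\oplus\Sigma\pi_{k-1}(\S^3)$, to a preimage all of whose coordinates are $p$-torsion, and then note that the proof of Theorem \ref{main} in fact annihilates any such element; since that is the same computation you perform directly, the detour buys nothing.

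The substantive issue is exactly the one you flagged: the lower bound needed for the asserted equalities $\exp_p[\Omega(\S^{r+1}),\Omega(\mathbb{H}P^\infty)]=\exp_p(\S^4)=p^3$ at odd primes is not supplied --- and, you should know, it is not supplied by the paper either, whose proof passes from the upper bound to ``we conclude'' without exhibiting any element of order $p^3$. Your instinct that no such element can come from a single coordinate is correct and can be pushed further: every coordinate lies in $\pi_{ir+1}(\mathbb{H}P^\infty;p)\cong\pi_{ir}(\S^3;p)$, which has exponent $\exp_p(\S^3)=p$, and for $p>3$ Proposition \ref{whp} kills all Whitehead-product correction terms $p$-locally, so the $p$-torsion elements form an abelian group of exponent at most $p$, not $p^3$; for $p=3$ a count of $v_3\bigl(\tbinom{9}{2}\bigr)$ and $v_3\bigl(\tbinom{9}{3}\bigr)$ already gives exponent at most $9<27$. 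So do not try to close this gap as stated: prove the upper bounds as you did, and replace the odd-primary equality with $\exp_p[\Omega(\S^{r+1}),\Omega(\mathbb{H}P^\infty)]\le\exp_p(\S^3)$ for $p>3$ (with equality whenever $p$-torsion occurs in the degrees $ir$) and $\exp_3\le 9$.
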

\begin{proof}
Since $\mathbb{H}P^\infty=B\S^3$, in view of Proposition \ref{H-equiv}, the connection map $\partial : \Omega(\mathbb{H}P^\infty)\to \S^3$ associated
with the fibration $\S^3\rightharpoonup E\S^3\to\mathbb{H}P^\infty$ leads to an isomorphism $$[\Omega(\S^{r+1}),\Omega(\mathbb{H}P^\infty)]\cong[\Omega(\S^{r+1}),\S^3]$$ for $r\ge 1$ and consequently,
$$\exp_p[\Omega(\S^{r+1}),\Omega(\mathbb{H}P^\infty)]=\exp_p[\Omega(\S^{r+1}),\S^3].$$
In the rest of proof we mimic {\em mutatis mutandis} the ideas of the proof
of Theorem \ref{main} and sketch below the main facts only.

Case (1): $p=2$. In view of Proposition \ref{whp}, we get that $4[\alpha,\beta]=0$ for $\alpha\in\pi_k(\mathbb{H}P^\infty;2)$ and $\beta\in\pi_l(\mathbb{H}P^\infty;2)$.
Further, all Whitehead products $[\alpha_1,\ldots,\alpha_m]=0$ of weight $m\ge 3$ for $\alpha_i\in\pi_{k_i}(\mathbb{H}P^\infty;2)$ with $i=1,\ldots,m$.

Because, by means of \cite{selick} and the $EHP$ sequence, we have $\exp_2(\S^4)=2^t$ with $t>1$, we conclude that $$\exp_2[\Omega(\S^{r+1}),\Omega(\mathbb{H}P^\infty)]\le 2\exp_2(\S^4)=2^{t+1}.$$

Now let $p$ be an odd prime. Then, by (\ref{expon}) we have, $\exp_p(\S^4)=p^3$.

Case (2): $p=3$. In view of Proposition \ref{whp}, we get that $3[\alpha,\beta]=0$ for $\alpha\in\pi_k(\mathbb{H}P^\infty;3)$ and $\beta\in\pi_l(\mathbb{H}P^\infty;3)$.
Further, $3[[\alpha,\beta],\gamma]=0$ for $\alpha\in\pi_k(\mathbb{H}P^\infty;3)$, $\beta\in\pi_l(\mathbb{H}P^\infty;3)$ and $\gamma\in\pi_l(\mathbb{H}P^\infty;3)$.
Certainly, all Whitehead products $[\alpha_1,\ldots,\alpha_m]=0$ of weight $m\ge 4$ for $\alpha_i\in\pi_{k_i}(\mathbb{H}P^\infty;3)$ with $i=1,\ldots,m$.
Because  $\exp_3(\S^4)=27$, we conclude that $$\exp_3[\Omega(\S^{r+1}),\Omega(\mathbb{H}P^\infty)]=\exp_3(\S^4)=27.$$

Case (2): $p>3$. In view of Proposition \ref{whp}, we get that  all Whitehead products $[\alpha_1,\ldots,\alpha_m]=0$ of weight $m\ge 2$
for $\alpha_i\in\pi_{k_i}(\mathbb{H}P^\infty;p)$ with $i=1,\ldots,m$.
Hence, we conclude that $$\exp_p[\Omega(\S^{r+1}),\Omega(\mathbb{H}P^\infty)]=\exp_p(\S^4)=p^3$$
and the proof is complete.
\end{proof}

Let $\gamma_{n,\mathbb{H}} : \S^{4n+3}\to \mathbb{H}P^n$ be the quotient map. Since $\S^3\hookrightarrow \S^{4n+3}\stackrel{\gamma_{n,\mathbb{H}}}{\to}\mathbb{H}P^n$
is a principal $\S^3$-bundle, Corollary \ref{adold1} and Proposition \ref{exact-exp} lead to:
\begin{theorem} \label{HP(n)}
The principal $\S^3$-bundle $\S^3\hookrightarrow \S^{4n+3}\stackrel{\gamma_{n,\mathbb{H}}}{\to}\mathbb{H}P^n$ gives rise to a short exact sequence
$$1\to [\Omega(\S^{r+1}),\Omega(\S^{4n+3})]\stackrel{(\gamma_{n,\mathbb{H}})_\ast}\longrightarrow [\Omega(\S^{r+1}),\Omega(\mathbb{H}P^n)]\to [\Omega(\S^{r+1}),\S^3]\to 1$$
of groups for $r\ge 1$.
Consequently, $$\exp_p[\Omega(\S^{r+1}),\Omega(\S^{2n+1})]\le\exp_p[\Omega(\S^{r+1}),\Omega(\mathbb{H}P^n)]\le \exp_p[\Omega(\S^{r+1}),\Omega(\S^{2n+1})]\cdot \exp_p[\Omega(\S^{r+1}),\S^3]$$ for $n,r\ge 1$ and any prime $p$.
\label{quaternionic}
\end{theorem}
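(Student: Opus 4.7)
The plan is to derive the short exact sequence of groups from the Barratt--Puppe sequence of the principal $\S^3$-bundle, and then read off the exponent inequality from Proposition \ref{exact-exp}.

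First, apply the functor $[\Omega(\S^{r+1}),-]$ to the Barratt--Puppe sequence
$$\cdots\to \Omega(\S^3)\xrightarrow{\Omega i}\Omega(\S^{4n+3})\xrightarrow{\Omega\gamma_{n,\mathbb H}}\Omega(\mathbb{H}P^n)\xrightarrow{\partial_X}\S^3\xrightarrow{i}\S^{4n+3}\xrightarrow{\gamma_{n,\mathbb H}}\mathbb{H}P^n$$
associated with the principal $\S^3$-bundle $\S^3\hookrightarrow \S^{4n+3}\xrightarrow{\gamma_{n,\mathbb H}}\mathbb{H}P^n$. This yields an exact sequence of pointed sets
$$[\Omega\S^{r+1},\Omega\S^3]\to[\Omega\S^{r+1},\Omega\S^{4n+3}]\xrightarrow{(\gamma_{n,\mathbb H})_\ast}[\Omega\S^{r+1},\Omega\mathbb{H}P^n]\xrightarrow{(\partial_X)_\ast}[\Omega\S^{r+1},\S^3]\xrightarrow{i_\ast}[\Omega\S^{r+1},\S^{4n+3}],$$
in which each term is naturally a group via the co-$H$-structure on $\Sigma J(\S^r)\simeq \Omega(\S^{r+1})$, and all displayed maps are group homomorphisms; in particular, $(\partial_X)_\ast$ is a homomorphism because $\partial_X$ is an $H$-map by Corollary \ref{adold1}.

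The key observation is that the inclusion $i\colon \S^3\hookrightarrow \S^{4n+3}$ is null-homotopic whenever $n\ge 1$, since $\pi_3(\S^{4n+3})=0$ in that range. Consequently both $\Omega i\colon \Omega\S^3\to\Omega\S^{4n+3}$ and $i\colon \S^3\to\S^{4n+3}$ induce the zero homomorphism after applying $[\Omega(\S^{r+1}),-]$. Exactness at $[\Omega\S^{r+1},\Omega\S^{4n+3}]$ therefore forces $(\gamma_{n,\mathbb H})_\ast$ to be injective, while exactness at $[\Omega\S^{r+1},\S^3]$ makes $(\partial_X)_\ast$ surjective. This gives the desired short exact sequence
$$1\to[\Omega(\S^{r+1}),\Omega(\S^{4n+3})]\xrightarrow{(\gamma_{n,\mathbb H})_\ast}[\Omega(\S^{r+1}),\Omega(\mathbb{H}P^n)]\xrightarrow{(\partial_X)_\ast}[\Omega(\S^{r+1}),\S^3]\to 1$$
of groups for every $r\ge 1$.

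The exponent estimate is then immediate from Proposition \ref{exact-exp}, which bounds $\exp_p$ of the middle group of a short exact sequence from below by $\exp_p$ of the kernel and from above by the product of the $p$-exponents of the kernel and the quotient. I expect no serious technical obstacle: the only point requiring care is verifying that $(\partial_X)_\ast$ is a group homomorphism in order to legitimately invoke Proposition \ref{exact-exp}, and that is exactly the content of Corollary \ref{adold1}. (Strictly speaking the Barratt--Puppe sequence produces the sphere $\S^{4n+3}$ in the statement of the bound; the occurrence of $\S^{2n+1}$ in the displayed inequality appears to be a typographical slip for $\S^{4n+3}$.)
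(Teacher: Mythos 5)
Your argument is correct and is exactly the route the paper takes (the paper simply cites Corollary \ref{adold1} and Proposition \ref{exact-exp}, the explicit version of the argument being the $\mathbb{C}P^n$ case in Theorem \ref{complex}), and you are also right that the $\S^{2n+1}$ appearing in the displayed exponent inequality is a slip for $\S^{4n+3}$. One small correction: the group structure on $[\Omega(\S^{r+1}),\S^3]$ comes from the $H$-space (topological group) structure of $\S^3$, not from a co-$H$-structure on $\Omega(\S^{r+1})\simeq J(\S^r)$ --- which is not $\Sigma J(\S^r)$ and carries no such structure --- though this does not affect the validity of your argument.
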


\begin{remark}
In Theorem \ref{HP(n)}, the bounds can be computed or estimated using Theorem \ref{main} together with fact that $\exp_p[\Omega(\S^{r+1}),\S^3]=\exp_p(\S^4)=p^3$ if $p>2$.
While we give bounds for $\exp_p[\Omega(\S^{r+1}),\Omega(\mathbb{H}P^n)]$, the calculation of these exponents will be carried out for $n,r\ge 1$ and any prime $p$, in a forthcoming paper.
\end{remark}

\centerline{\sc Acknowledgments} This work was initiated and completed during the authors' visits to Banach Center in Warsaw, Poland,
October 27 - November 05, 2016 and February 17 - March 03, 2018, respectively. The authors would like to thank
the Banach Center in Warsaw, Poland and the Faculty of Mathematics and Computer Science, the University of Warmia and Mazury in Olsztyn, Poland
for their hospitality and support. 
\par Special thanks are due to Jim Stasheff for pointing out the Dold-Lashof result in \cite{dold} and \cite{stasheff} and to Jie Wu for helpful conversations regarding the Cohen groups.
Finally, the authors would like to express their gratitude to three anonymous referees for their invaluable suggestions which help improve the exposition of the paper.

\end{document}